\numberwithin{equation}{section} \DeclareMathSizes{2}{10}{12}{13}
\newcommand*{\doublerightarrow}[2]{\mathrel{
  \settowidth{\@tempdima}{$\scriptstyle#1$}
  \settowidth{\@tempdimb}{$\scriptstyle#2$}
  \ifdim\@tempdimb>\@tempdima \@tempdima=\@tempdimb\fi
  \mathop{\vcenter{
    \offinterlineskip\ialign{\hbox to\dimexpr\@tempdima+1em{##}\cr
    \rightarrowfill\cr\noalign{\kern.5ex}
    \rightarrowfill\cr}}}\limits^{\!#1}_{\!#2}}}
\newcommand{\leftrarrows}{\mathrel{\raise.75ex\hbox{\oalign{%
  $\scriptstyle\leftarrow$\cr
  \vrule width0pt height.5ex$\hfil\scriptstyle\relbar$\cr}}}}
\newcommand{\lrightarrows}{\mathrel{\raise.75ex\hbox{\oalign{%
  $\scriptstyle\relbar$\hfil\cr
  $\scriptstyle\vrule width0pt height.5ex\smash\rightarrow$\cr}}}}
\newcommand{\Rrelbar}{\mathrel{\raise.75ex\hbox{\oalign{%
  $\scriptstyle\relbar$\cr
  \vrule width0pt height.5ex$\scriptstyle\relbar$}}}}
\def\leftrightarrowsfill@{\arrowfill@\leftrarrows\Rrelbar\lrightarrows}
\newcommand{\xleftrightarrows}[2][]{\ext@arrow 3399\leftrightarrowsfill@{#1}{#2}}
\newtheorem{thm}{Proposition}[section]
\newtheorem{rem}[thm]{Remark}
\newtheorem{lem}[thm]{Lemma}
\newtheorem{defn}[thm]{Definition}
\title{Chow groups of ind-schemes and extensions of Saito's filtration}
\author{Abhishek Banerjee}
\date{}
\begin{document}

\maketitle

\medskip
\centerline{\emph{Department of Mathematics, Indian Institute of Science, Bangalore, India.}}
\centerline{\emph{Email: abhishekbanerjee1313@gmail.com}}

\medskip

\medskip

\begin{abstract} Let $K$ be a field of characteristic zero and let $Sm/K$ be the category
of smooth and separated schemes over $K$. For an ind-scheme $\mathcal X$ (and more
generally for any presheaf of sets on $Sm/K$), we define its Chow groups
$\{CH^p(\mathcal X)\}_{p\in \mathbb Z}$. We also introduce Chow groups $\{\mathcal{CH}^p(\mathcal G)\}_{p\in \mathbb Z}$
for a presheaf with transfers $\mathcal G$ on  $Sm/K$.  Then, we show that we have natural isomorphisms of Chow groups
\begin{equation*}
CH^p(\mathcal X)\cong \mathcal{CH}^p(Cor(\mathcal X))\qquad\forall\textrm{ }p
\in \mathbb  Z\end{equation*} where $Cor(\mathcal X)$ is the presheaf with transfers
that associates to any $Y\in Sm/K$ the collection of finite correspondences from $Y$ to $\mathcal X$. Additionally, when $K=\mathbb C$, we show that Saito's filtration on
the Chow groups of a smooth projective scheme can be extended to the  Chow groups
$CH^p(\mathcal X)$ and more generally, to the Chow groups of an arbitrary presheaf
of sets on $Sm/\mathbb C$.  Similarly, there exists an extension of Saito's filtration to the Chow groups
of a presheaf with transfers on $Sm/\mathbb C$. Finally, when the ind-scheme $\mathcal X$ is ind-proper,
we show that the isomorphism $CH^p(\mathcal X)\cong \mathcal{CH}^p(Cor(\mathcal X))$ is actually a filtered isomorphism. 
\end{abstract}

\medskip
{\bf MSC(2010) Subject Classification: 14C15, 14C25}

\medskip
{\bf Keywords: Ind-schemes, Saito's filtration}

\medskip

\medskip

\section{Introduction}

\medskip

\medskip

In this paper, we define Chow groups of ind-schemes, using the bivariant Chow theory of Fulton and MacPherson \cite{FM}. 
Various aspects of the theory of  ind-schemes have been studied in detail by several authors. For instance, the derived algebraic geometry for ind-schemes
has been developed by Gaitsgory and Rozenblyum (see \cite{GR1}, \cite{GR2}, \cite{GR3}). Further, ind-coherent sheaves over ind-schemes have been studied
by Gaitsgory \cite{Gait}. Sheaves over ind-schemes and ind-stacks have been used by Bezrukavnikov, Kazhdan and Varshavsky in \cite{BKY} to study problems in
representation theory. Ind-schemes have also been applied to the study of $D$-modules in papers by Kapranov and Vasserot \cite{KV1}, \cite{KV2}, \cite{KV3}, \cite{KV4}. As such, we hope that this paper is the first step towards developing the intersection theory for ind-schemes. 

\medskip
Let $K$ be a field of characteristic zero. Then, Fulton and MacPherson \cite{FM} have defined the bivariant Chow groups
$CH^p(f:X'\longrightarrow X)$, $p\in \mathbb Z$ for a morphism $f:X'\longrightarrow X$ of schemes over $K$.  Let $g:X''\longrightarrow X$ be a morphism and consider the fiber square:
\begin{equation}
\begin{CD}
X''' @>f'>> X'' \\
@Vg'VV @VgVV \\
X' @>f>> X \\
\end{CD}
\end{equation} Then, a bivariant class $c\in CH^p(f:X'\longrightarrow X)$ is that which associates to each such  morphism $g:X''\longrightarrow X$ of schemes, a family of homomorphisms:
\begin{equation}
c^k_g:CH_k(X'')\longrightarrow CH_{k-p}(X''') \qquad \forall\textrm{ }k\in \mathbb Z
\end{equation}
that is well behaved with respect to flat pullbacks, proper pushforwards and refined Gysin morphisms (see \cite{FM}).   In particular, for a scheme $X$  over $K$, the operational Chow cohomology groups
of $X$ are defined as:
\begin{equation}
CH^p(X):=CH^p(1:X\longrightarrow X) \qquad \forall\textrm{ }p\in \mathbb Z
\end{equation} Then, for any morphism $h:Y\longrightarrow X$, the  formalism of the bivariant theory of Fulton and MacPherson allows us to define
a pullback $h^*:CH^p(X)\longrightarrow CH^p(Y)$ on the operational Chow groups. We now restrict ourselves to the category $Sm/K$ of smooth and separated schemes over $K$. Then, if
$Y$, $X\in Sm/K$, we know that a correspondence $Z$ 
from $Y$ to $X$ induces a pullback on Chow cohomology groups. The starting point of this paper is the  aim to
construct a bivariant Chow theory that naturally incorporates these additional pullback maps induced by correspondences. 
More generally, we will construct a bivariant formalism to define Chow groups of presheaves with transfers. Of particular interest
are the Chow groups of presheaves with transfers $C_X$ corresponding to some $X\in Sm/K$ (for any $Y\in Sm/K$, $C_X(Y)$ is the abelian
group of all finite correspondences from $Y$ to $X$). More generally, we will study Chow groups of the presheaves
with transfers $Cor(\mathcal X)$  corresponding to some ind-scheme $\mathcal X$  over $K$.  
 By an ind-scheme over $K$, we will mean an ind-object of the category $Sm/K$; in other words, the category
$Ind-(Sm/K)$ of ind-schemes over $K$ is the ``free cocompletion of $Sm/K$ with respect to filtered colimits''. It is clear
that the category $Sm/K$ embeds into the category $Ind-(Sm/K)$ of ind-schemes.

\medskip Let $\mathcal X$ be an ind-scheme over $K$. We proceed to define the Chow groups
$\{CH^p(\mathcal X)\}_{p\in \mathbb Z}$ of $\mathcal X$ in two different ways. First, we can define $\{CH^p(\mathcal X)\}_{p\in \mathbb Z}$ to be the Chow groups of the presheaf
of sets on $Sm/K$ defined by the ind-scheme $\mathcal X$. These Chow groups are defined by adapting our definition
in \cite{AB6} and in the spirit of the bivariant Chow theory of Fulton and MacPherson \cite{FM}. On the other hand, if we enlarge
the category $Sm/K$ by adding correspondences, we can consider the presheaf with transfers $Cor(\mathcal X)$ associated
to $\mathcal X$. For any $Y\in Sm/K$, $Cor(\mathcal X)(Y)$ may be seen as the collection of finite correspondences from 
$Y$ to the ind-scheme $\mathcal X$. Then, we can define Chow groups of $\mathcal X$ by considering the Chow groups
$\{\mathcal{CH}^p(Cor(\mathcal X))\}_{p\in \mathbb Z}$ 
of the presheaf with transfers $Cor(\mathcal X)$. Our first main result is to show that the Chow groups of $\mathcal X$ obtained from these two
approaches are isomorphic to each other, i.e., $CH^p(\mathcal X)\cong \mathcal{CH}^p(Cor(\mathcal X))$ $\forall$ $p\in \mathbb Z$. When $K=\mathbb C$,  we  extend Saito's filtration \cite{Saito}
on the Chow groups of a smooth projective scheme over $\mathbb C$ to the Chow groups $CH^p(\mathcal X)$ of the ind-scheme $\mathcal X$ considered as a presheaf
of sets on $Sm/\mathbb C$. Similarly,  there exists an extension of Saito's filtration to the Chow groups $\mathcal{CH}^p(Cor(\mathcal X))$ of the presheaf with transfers
$Cor(\mathcal X)$. When the ind-scheme $\mathcal X$ is also ind-proper, our second main result is that the isomorphism $CH^p(\mathcal X)\cong \mathcal{CH}^p(Cor(\mathcal X))$  mentioned above can be promoted
to a filtered isomorphism with respect to these filtrations.

\medskip
We now describe the structure of the paper in more detail: we start in Section 2 by considering the 
category $Cor_K$ with the same objects as $Sm/K$ and in which the collection of morphisms from $Y$ to 
$X$ is the  abelian group $Cor(Y,X)$ of finite correspondences from $Y$ to $X$ (see Definition \ref{D1}).   Then the category $Presh(Cor_K)$ of  presheaves with transfers is the category of additive contravariant functors from $Cor_K$ to the category $\mathbf{Ab}$  of abelian
groups. For any $X\in Cor_K$, the association $Y\mapsto Cor(Y,X)$ $\forall$ $Y\in Cor_K$ determines a presheaf with transfers which we denote by $C_X$. Let $Presh(Sm/K)$ denote the category of presheaves of sets on 
$Sm/K$. We start by considering
the  functor $Cor$ that associates to any presheaf $\mathcal F
\in Presh(Sm/K)$ a presheaf with transfers $Cor(\mathcal F)$. In particular, if $\mathcal F=h_X$, the presheaf of sets
on $Sm/K$ represented by some $X\in Sm/K$, we have $Cor(h_X)=C_X$. We mention that this functor of adding transfers has already been studied by
Cisinski and D\'{e}glise \cite[Section 10.3-10.4]{CD}. The  functor $Cor$ is left adjoint to the forgetful functor, i.e.,
\begin{equation}
Mor_{Presh(Cor_K)}(Cor(\mathcal F),\mathcal G)\cong Mor_{Presh(Sm/K)}
(\mathcal F,\mathcal G)
\end{equation} for any $\mathcal F\in Presh(Sm/K)$ and any $\mathcal G\in Presh(Cor_K)$. Applying the functor
$Cor$ to an ind-scheme $\mathcal X$, we obtain a presheaf with transfers $Cor(\mathcal X)$. Then, for any
scheme $Y\in Cor_K$,  $Cor(\mathcal X)(Y)$ may be seen as the collection of finite correspondences from
$Y$ to the ind-scheme $\mathcal X$. For any scheme $X\in Sm/K$, we know that the presheaf with transfers
$C_X$ is a sheaf for the \'{e}tale topology on $Sm/K$. Then, in Section 2 we show that for an ind-scheme 
$\mathcal X$, the presheaf with transfers $Cor(\mathcal X)$ is also an \'{e}tale sheaf on $Sm/K$.

\medskip
 In Section 3, we take two approaches to defining the Chow groups of an ind-scheme $\mathcal X$. First,  we define the Chow groups
$\{CH^p(\mathcal F)\}_{p\in \mathbb Z}$ for any presheaf $\mathcal F\in Presh(Sm/K)$ 
as well as the Chow groups $\{\mathcal{CH}^p(\mathcal G)\}_{p\in \mathbb Z}$
for any presheaf with transfers $\mathcal G\in Presh(Cor_K)$   by modifying the definition
 in \cite{AB6} (see Definition \ref{D31} and Definition \ref{D32} respectively).  In 
particular, for the presheaf $h_X$ on $Sm/K$ represented by some $X\in Sm/K$, we show that the Chow
groups $\{CH^p(h_X)\}_{p\in \mathbb Z}$ are naturally isomorphic to the usual Chow
groups of $X$. Given $X\in Sm/K$, we can also consider the Chow groups of the presheaf with transfers $C_X$ defined by
setting $C_X(Y):=Cor(Y,X)$ for any $Y\in Cor_K$. Then, the first main result of Section 3 is that for any $X\in Sm/K$, we 
have isomorphisms: 
\begin{equation}\label{ZYZ1.2}
CH^p(X)\cong CH^p(h_X)\cong \mathcal{CH}^p(C_X)\qquad \forall\textrm{ }p\in \mathbb Z
\end{equation} Thereafter, we consider the case of an ind-scheme $\mathcal X$ over $K$. If we treat $\mathcal X$
as a presheaf of sets on $Sm/K$, we have the corresponding Chow groups $\{CH^p(\mathcal X)\}_{p\in \mathbb Z}$
of $\mathcal X$. We also consider the Chow groups of the presheaf with transfers $Cor(\mathcal X)$ associated to
$\mathcal X$. Then, the main result of Section 3 is that for any $\mathcal X\in Ind-(Sm/K)$, we have isomorphisms:
\begin{equation}\label{ZYZ1.3}
CH^p(\mathcal X)\cong \mathcal{CH}^p(Cor(\mathcal X))\qquad\forall\textrm{ }p
\in \mathbb Z
\end{equation}

\medskip
In the final section, i.e., in Section 4, we assume that $K=\mathbb C$. For any smooth
projective scheme $Y$ over $\mathbb C$, Saito \cite{Saito} has defined a descending
filtration on its Chow groups, which is a candidate for the conjectural
Bloch-Beilinson filtration (see, for instance, Levine \cite[$\S$ 7]{Levine2}). Then, we extend Saito's filtration 
on the Chow groups of a smooth projective scheme over $\mathbb C$
to the Chow groups of any presheaf $\mathcal F$ on $Sm/\mathbb C$ (see Definition \ref{D62}). 
We mention here that if we work with presheaves on the category $SmProj/\mathbb C$ of smooth and projective schemes,  instead of $Sm/\mathbb C$, we have introduced a similar
notion for an extended Saito filtration on Chow groups in \cite[D\'{e}finition 3.2]{AB7}. In fact, when
working with presheaves on $SmProj/\mathbb C$, we can also obtain an extended Saito filtration on
higher Chow groups (see \cite{AB7}). For our earlier work on higher bivariant Chow groups, we refer
the reader to \cite{AB0}. 

\medskip In particular, we know from 
Section 3 that for any $X\in Sm/\mathbb C$, the Chow groups $\{CH^p(h_X)\}_{p\in \mathbb Z}$
of the presheaf $h_X$ on $Sm/\mathbb C$ represented by $X$ are isomorphic to the
usual Chow groups of $X$. It follows that when $X$ is any smooth and separated (but not necessarily projective) scheme
over $\mathbb C$, we have an extension of Saito's filtration to the Chow groups of $X$:
\begin{equation}\label{ZYZ4.18}
CH^p(X)\cong CH^p(h_X)=F^0CH^p(h_X)\supseteq F^1CH^p(h_X)\supseteq F^2CH^p(h_X)
\supseteq \dots 
\end{equation}
 When $X$ is in fact smooth and projective, we show that our filtration 
$\{F^rCH^p(h_X)\}_{r\geq 0}$ on $CH^p(h_X)\cong CH^p(X)$  actually coincides with Saito's filtration on the Chow
group $CH^p(X)$. Further, we show that we can extend Saito's filtration to the Chow groups of presheaves with
transfers (see Definition \ref{D66}) and in particular to the Chow groups $\mathcal{CH}^p(C_X)$ for any $X\in Sm/\mathbb C$:
\begin{equation}\label{ZYZ4.19}
\mathcal{CH}^p(C_X)=F^0\mathcal{CH}^p(C_X)\supseteq F^1\mathcal{CH}^p(C_X)\supseteq F^2\mathcal{CH}^p(C_X)
\supseteq \dots 
\end{equation} As mentioned in \eqref{ZYZ1.2} above, it is shown in Section 3 that $CH^p(h_X)\cong \mathcal{CH}^p(C_X)$, 
$\forall$ $p\in \mathbb Z$. Then, the first main result of Section 4 is that when $X\in Sm/\mathbb C$ is also proper,
the filtrations in \eqref{ZYZ4.18} and \eqref{ZYZ4.19} on $CH^p(h_X)$ and $\mathcal{CH}^p(C_X)$ respectively are isomorphic. Again, for an ind-scheme $\mathcal X$ over $\mathbb C$, we obtain an extension of Saito's filtration to the Chow groups
of $\mathcal X$:
\begin{equation}\label{ZYZ1.6}
CH^p(\mathcal X)=F^0CH^p(\mathcal X)\supseteq F^1CH^p(\mathcal X)\supseteq F^2CH^p(\mathcal X)
\supseteq \dots 
\end{equation} If we consider the presheaf with transfers $Cor(\mathcal X)$ associated to the ind-scheme $\mathcal X$, we obtain a filtration
on the Chow groups of $Cor(\mathcal X)$:
\begin{equation}\label{ZYZ1.7}
\mathcal{CH}^p(Cor(\mathcal X))=F^0\mathcal{CH}^p(Cor(\mathcal X))\supseteq F^1\mathcal{CH}^p(Cor(\mathcal X))\supseteq F^2\mathcal{CH}^p(Cor(\mathcal X))
\supseteq \dots 
\end{equation} As mentioned in \eqref{ZYZ1.3} above, it is shown in Section 3 that $CH^p(\mathcal X)\cong \mathcal{CH}^p(Cor(\mathcal X))$, $\forall$ $p\in \mathbb Z$. Then, the final objective of Section 4 is to show that when the ind-scheme $\mathcal X$ is
also ind-proper, we have isomorphisms:
\begin{equation}
F^rCH^p(\mathcal X)\cong F^r\mathcal{CH}^p(Cor(\mathcal X)) \qquad \forall\textrm{ }r\geq 0
\end{equation} In other words, for any ind-scheme $\mathcal X$ that is also ind-proper, the isomorphism in \eqref{ZYZ1.3} is a filtered isomorphism with respect to the filtrations in \eqref{ZYZ1.6} and \eqref{ZYZ1.7}.

\medskip

\medskip

\section{The functor $\mathbf{Cor}$ on   presheaves and ind-schemes}

\medskip

\medskip
Let $K$ be a field of characteristic zero and let $Sm/K$ denote the category of 
smooth and separated schemes over
$K$. For any scheme $X\in Sm/K$, we let $h_X$ denote the presheaf  on $Sm/K$
represented by $X$.

\medskip
\begin{defn}\label{D1} (see \cite[Definition 1.1]{WMV}) Let $X$, $Y\in Sm/K$ and let $Y=\underset{i\in I}{\coprod} Y_i$ be the decomposition of $Y$ into its connected components. For any given $i\in I$, an elementary correspondence from 
$Y_i$ to $X$ is an irreducible closed subset $Z\subseteq Y_i\times X$ whose associated
integral subscheme is finite and surjective over $Y_i$. Let $Cor(Y_i,X)$ be the free abelian group generated by all elementary  correspondences from $Y_i$ to $X$. Then, the collection
$Cor(Y,X)$  of all finite correspondences from $Y$ to $X$  is the direct sum  $Cor(Y,X):=\underset{i\in I}{\oplus}Cor(Y_i,X)$ of abelian groups.   
\end{defn}

\medskip
\noindent  For any morphism $f:Y\longrightarrow X$ in $Sm/K$ with $Y$ connected, the graph $\Gamma_f$ 
of the morphism $f$ determines a finite correspondence from $Y$ to $X$. Further, it is well known that there exists a composition of correspondences 
\begin{equation}\label{2.1}
Cor(Z,Y)\otimes Cor(Y,X)\longrightarrow Cor(Z,X) \qquad (u,v)\mapsto v\circ u
\end{equation} for any $X$, $Y$, $Z\in Sm/K$. For more on the composition
of correspondences, we refer the reader to the work of Suslin and Voevodsky\cite{SV}. It follows 
from \eqref{2.1} that for any $X\in Sm/K$, we have
a presheaf $C_X$ on $Sm/K$ defined by $C_X(Y):=Cor(Y,X)$ for any $Y\in Sm/K$. We notice
that $C_X$ is actually a presheaf of abelian groups on $Sm/K$. Accordingly, 
for any $Y$, $Z\in Sm/K$ and given a correspondence $u\in Cor(Z,Y)$, it follows from 
\eqref{2.1} that there exists an induced morphism 
\begin{equation}\label{2.2}
\begin{CD}
C_X(u):C_X(Y)=Cor(Y,X)@>\_\_\circ u>> Cor(Z,X)=C_X(Z)\\
\end{CD}
\end{equation} of abelian groups. 

\medskip
\begin{defn}\label{D2} (see \cite[Definition 2.1]{WMV}) Let $Cor_K$ denote the additive category
consisting of the same objects as $Sm/K$ and whose morphisms from $Y$ to $X$ are the
elements of $Cor(Y,X)$. A presheaf with transfers is an additive contravariant functor
$\mathcal F:Cor_K\longrightarrow \mathbf{Ab}$, where $\mathbf{Ab}$ denotes the category
of abelian groups. The category of presheaves with transfers will be denoted by
$Presh(Cor_K)$.  
\end{defn} 

\medskip
From \eqref{2.2}, it follows that for any $X\in Sm/K$, we have an associated presheaf with transfers $C_X\in Presh(Cor_K)$. We let $Presh(Sm/K)$ denote the category of presheaves of sets
on $Sm/K$. We will now define a functor $Cor$ that associates to any object $\mathcal F
\in Presh(Sm/K)$ a presheaf with transfers $Cor(\mathcal F)$. For any $\mathcal F
\in Presh(Sm/K)$,  we consider the inductive system $Dir(\mathcal F)$ whose
objects are pairs $(X,x)$ with $X\in Sm/K$ and $x\in \mathcal F(X)$. A morphism
$f:(Y,y)\longrightarrow (X,x)$ in $Dir(\mathcal F)$ is given by a morphism
$f:Y\longrightarrow X$ in $Sm/K$ such that $\mathcal F(f)(x)=y$, where $\mathcal F(f)$
denotes the induced map $\mathcal F(f):\mathcal F(X)\longrightarrow \mathcal F(Y)$. From
Yoneda lemma, it now follows that the presheaf $\mathcal F$ can be expressed
as the colimit:
\begin{equation}\label{2.3}
\mathcal F=\underset{(X,x)\in Dir(\mathcal F)}{colim}\textrm{ }h_X
\end{equation} We now consider the functor
\begin{equation}\label{2.4}
Cor:Presh(Sm/K)\longrightarrow Presh(Cor_K)\qquad Cor(\mathcal F):=\underset{(X,x)\in Dir(\mathcal F)}{colim}\textrm{ }C_X
\end{equation} Since each of the objects $C_X\in Presh(Cor_K)$, it follows from
\eqref{2.4} that the colimit $Cor(\mathcal F)\in Presh(Cor_K)$. We mention here that this functor of adding transfers has 
already been studied by Cisinski and D\'{e}glise \cite[Section 10.3-10.4]{CD}. We will now study below the properties of this functor $Cor:Presh(Sm/K)\longrightarrow Presh(Cor_K)$ that are required for the purposes of this paper.

\medskip
\begin{thm}\label{P1} Let $X$ be an object of $Sm/K$ and let $h_X$ denote the presheaf on $Sm/K$ represented
by $X$. Then, $Cor(h_X)=C_X$. 
\end{thm}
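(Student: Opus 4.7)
The plan is to unwind the definition of $Cor(h_X)$ from (2.4) and exhibit a terminal object in the indexing category $Dir(h_X)$, which will collapse the colimit to a single value.

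First, I would compute $Dir(h_X)$ explicitly. Its objects are pairs $(Y,y)$ with $Y\in Sm/K$ and $y\in h_X(Y)=Mor_{Sm/K}(Y,X)$, so $y$ is just a morphism $y:Y\to X$. A morphism $(Y',y')\to (Y,y)$ is a morphism $f:Y'\to Y$ in $Sm/K$ with $h_X(f)(y)=y\circ f=y'$. In particular, the pair $(X,\mathrm{id}_X)$ lies in $Dir(h_X)$, and for any other object $(Y,y)$, a morphism $(Y,y)\to (X,\mathrm{id}_X)$ is a morphism $f:Y\to X$ with $\mathrm{id}_X\circ f=y$, which forces $f=y$. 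Hence $(X,\mathrm{id}_X)$ is a terminal object of $Dir(h_X)$.

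Next, I would invoke the standard categorical fact that if an indexing category has a terminal object $t$, then for any functor $F$ on it, the colimit of $F$ is canonically isomorphic to $F(t)$: every cocone factors uniquely through the value at $t$, because for each index $i$ the unique arrow $i\to t$ gives a compatibility $F(i)\to F(t)\to C$ forcing the cocone maps to come from a single map out of $F(t)$. Applying this to the functor $(Y,y)\mapsto C_Y$ on $Dir(h_X)$, the colimit in (2.4) reduces to its value at the terminal object $(X,\mathrm{id}_X)$, namely $C_X$. Therefore
\begin{equation*}
Cor(h_X)=\underset{(Y,y)\in Dir(h_X)}{colim}\textrm{ }C_Y\cong C_X.
\end{equation*}

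I do not foresee a real obstacle here: once one sees that $(X,\mathrm{id}_X)$ is terminal in $Dir(h_X)$, the argument is immediate. The only thing worth doing carefully is verifying that the colimit defining $Cor(h_X)$ is taken in $Presh(Cor_K)$ (computed pointwise on $Sm/K$) and that the cocone maps $C_Y\to C_X$ induced by $y:Y\to X$ are compatible with this diagram, which is built into the construction. One could also sanity-check the identification through the adjunction from the introduction: for any $\mathcal G\in Presh(Cor_K)$,
\begin{equation*}
Mor_{Presh(Cor_K)}(Cor(h_X),\mathcal G)\cong Mor_{Presh(Sm/K)}(h_X,\mathcal G)\cong \mathcal G(X)\cong Mor_{Presh(Cor_K)}(C_X,\mathcal G),
\end{equation*}
giving the same identification by Yoneda.
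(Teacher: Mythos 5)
Your proof is correct and follows essentially the same route as the paper: the paper likewise observes that $(X,\mathrm{id}_X)$ is a final object of $Dir(h_X)$ and concludes that the colimit defining $Cor(h_X)$ collapses to $C_X$. Your version simply spells out the verification of terminality and adds an optional cross-check via the adjunction, neither of which changes the argument.
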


\begin{proof} For the presheaf $h_X$ on $Sm/K$, it is clear that the associated direct
system $Dir(h_X)$ has a final object $(X,1)$, where $1\in h_X(X)$ denotes the identity morphism
$1:X\longrightarrow X$. It follows that :
\begin{equation}
Cor(h_X)=\underset{(Y,y)\in Dir(h_X)}{colim}\textrm{ }C_Y = C_X
\end{equation}

\end{proof}

\medskip
\begin{thm}\label{P2} The functor $Cor:Presh(Sm/K)\longrightarrow Presh(Cor_K)$ is left adjoint
to the forgetful functor, i.e., for any $\mathcal F\in Presh(Sm/K)$, $\mathcal 
G\in Presh(Cor_K)$, we have natural isomorphisms:
\begin{equation}\label{gopinath}
Mor_{Presh(Cor_K)}(Cor(\mathcal F),\mathcal G)\cong Mor_{Presh(Sm/K)}(\mathcal F,
\mathcal G)
\end{equation}
\end{thm}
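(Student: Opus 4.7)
The plan is to reduce the adjunction to the representable case and then extend by the colimit presentation \eqref{2.3} of $\mathcal{F}$. The crux is a Yoneda-type lemma in two flavors.

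First I would establish an additive Yoneda lemma for presheaves with transfers: for any $X\in Sm/K$ and any $\mathcal G\in Presh(Cor_K)$, evaluation at $1_X\in Cor(X,X)=C_X(X)$ gives a natural bijection $Mor_{Presh(Cor_K)}(C_X,\mathcal G)\cong \mathcal G(X)$. This just expresses the fact that $C_X$ is the presheaf with transfers represented by $X$ in the additive category $Cor_K$. Combined with the ordinary Yoneda lemma $Mor_{Presh(Sm/K)}(h_X,\mathcal G)\cong \mathcal G(X)$ (where $\mathcal G$ is viewed as a presheaf of sets on $Sm/K$ by composing with the graph functor $f\mapsto \Gamma_f$), this proves \eqref{gopinath} in the case $\mathcal F=h_X$, consistent with Proposition \ref{P1}.

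For a general $\mathcal F$, I would use the presentation \eqref{2.3} together with the definition \eqref{2.4} of $Cor(\mathcal F)$ as a colimit over the same indexing category $Dir(\mathcal F)$. Since the $Mor$-functor in each category sends colimits in the first argument to limits, one obtains
\begin{align*}
Mor_{Presh(Cor_K)}(Cor(\mathcal F),\mathcal G)&\cong \lim_{(X,x)\in Dir(\mathcal F)}Mor_{Presh(Cor_K)}(C_X,\mathcal G),\\
Mor_{Presh(Sm/K)}(\mathcal F,\mathcal G)&\cong \lim_{(X,x)\in Dir(\mathcal F)}Mor_{Presh(Sm/K)}(h_X,\mathcal G).
\end{align*}
By the two Yoneda identifications above, both limits are computed on the diagram $(X,x)\mapsto \mathcal G(X)$ indexed by $Dir(\mathcal F)$.

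The main obstacle, and the one point that is not formal, is checking that the transition maps in these two limit diagrams coincide. For a morphism $f:(Y,y)\to(X,x)$ in $Dir(\mathcal F)$, the induced map $C_Y\to C_X$ is given by composition with $\Gamma_f$ (as in \eqref{2.2}), while $h_Y\to h_X$ is postcomposition with $f$. Under $Mor(-,\mathcal G)$ and the two Yoneda isomorphisms, both transition arrows $\mathcal G(X)\to\mathcal G(Y)$ are computed as $\mathcal G(\Gamma_f)$, since the forgetful functor $Presh(Cor_K)\to Presh(Sm/K)$ acts on a morphism $f$ precisely by its graph. Once this compatibility is verified by a routine diagram chase, naturality in $\mathcal G$ is immediate and yields the adjunction isomorphism \eqref{gopinath}.
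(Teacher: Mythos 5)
Your proposal is correct and follows essentially the same route as the paper: the representable case via the (additive) Yoneda identification $Mor_{Presh(Cor_K)}(C_X,\mathcal G)\cong \mathcal G(X)$ — which the paper proves by exhibiting the explicit mutually inverse maps $f\mapsto f\circ\Delta_X$ and $x\mapsto (c\mapsto \mathcal G(c)(x))$ — followed by passage to general $\mathcal F$ using the colimit presentation over $Dir(\mathcal F)$ and the fact that $Mor(-,\mathcal G)$ sends colimits to limits. Your explicit check that the two limit diagrams have matching transition maps is a detail the paper leaves implicit, but it is the same argument.
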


\begin{proof} First, we consider the case with $\mathcal F=h_X$ for some 
$X\in Sm/K$. From Proposition \ref{P1}, we know that $Cor(h_X)=C_X$. In that case,
we need to show that
\begin{equation}\label{ttr}
Mor_{Presh(Cor_K)}(C_X,\mathcal G)\cong Mor_{Presh(Sm/K)}(h_X,
\mathcal G)\cong \mathcal G(X)
\end{equation} Now, for
any morphism $f:C_X\longrightarrow \mathcal G$ in 
$Presh(Cor_K)$, we have an associated morphism
\begin{equation}\label{e2.8bq}
h_X\overset{\Delta_X}{\longrightarrow} C_X\overset{f}{\longrightarrow} \mathcal G
\end{equation} in $Presh(Sm/K)$. Here $\Delta_X:h_X\longrightarrow C_X$ is the morphism
of presheaves on $Sm/K$ corresponding (by Yoneda lemma) to the element of $Cor(X,X)=C_X(X)$
defined by the identity morphism $1:X\longrightarrow X$. Conversely,
choose any element $x\in \mathcal G(X)$ corresponding to a morphism $x:h_X\longrightarrow 
\mathcal G$ of presheaves on $Sm/K$. Then, since $\mathcal G$ is actually a presheaf with transfers, for any $Y\in Sm/K$, each correspondence $c\in Cor(Y,X)=C_X(Y)$ induces
a morphism
\begin{equation}
\mathcal G(c):\mathcal G(X)\longrightarrow \mathcal G(Y)
\end{equation} It follows that for any element $x\in \mathcal G(X)=Mor_{Presh(Sm/K)}(h_X,\mathcal G)$ we have an associated morphism $C_X\longrightarrow \mathcal G$ of presheaves with transfers determined by the family of morphisms  
\begin{equation}\label{e2.10bq}
C_X(Y)\longrightarrow \mathcal G(Y) \qquad c\mapsto \mathcal G(c)(x) \qquad
\forall\textrm{ }Y\in Cor_K
\end{equation} It is clear that the associations  in \eqref{e2.8bq} and \eqref{e2.10bq} are inverse to each other, thus
proving the isomorphism $Mor_{Presh(Cor_K)}(C_X,\mathcal G)\cong Mor_{Presh(Sm/K)}(h_X,
\mathcal G)\cong \mathcal G(X)$ in \eqref{ttr}.

\medskip
More generally, for any presheaf $\mathcal F\in Presh(Sm/K)$, we consider the system
$Dir(\mathcal F)$ as defined above. It follows that:
\begin{equation}
\begin{array}{ll}
Mor_{Presh(Cor_K)}(Cor(\mathcal F),\mathcal G) & \cong Mor_{Presh(Cor_K)}\left(\underset{(X,x)\in Dir(\mathcal F)}{colim}\textrm{ }C_X,\mathcal G\right) \\
& \cong \underset{(X,x)\in Dir(\mathcal F)}{lim}\textrm{ }Mor_{Presh(Cor_K)}(C_X,\mathcal G) \\
& \cong \underset{(X,x)\in Dir(\mathcal F)}{lim}\textrm{ }Mor_{Presh(Sm/K)}(h_X,\mathcal G) \\
&\cong Mor_{Presh(Sm/K)}\left(\underset{(X,x)\in Dir(\mathcal F)}{colim}\textrm{ }h_X,\mathcal G\right) \\
& \cong Mor_{Presh(Sm/K)}(\mathcal F,\mathcal G)\\
\end{array}
\end{equation}
This proves the result.

\end{proof}

\medskip
For any presheaf $\mathcal F$
of sets on $Sm/K$, let $\mathcal F_{et}$ denote
the sheafification of $\mathcal F$ for the  \'{e}tale topology on $Sm/K$. If 
$\mathcal F$ also carries transfer maps, i.e., $\mathcal F$ is actually a presheaf with transfers, then we know that  the transfer maps lift to the \'{e}tale sheafification 
$\mathcal F_{et}$ of $\mathcal F$ (see \cite[$\S$ 6]{WMV}). In fact, the same holds for the Nisnevich site, i.e., 
if $\mathcal F$ is a presheaf with transfers, the transfer maps lift to the Nisnevich sheafification $\mathcal F_{Nis}$ of
$\mathcal F$ (see \cite[$\S$ 13]{WMV}). However, this is not always true for the
sheafification of $\mathcal F$ with respect to the Zariski site.  In this paper, unless otherwise mentioned, by a sheaf
on $Sm/K$, we will always mean a sheaf for the \'{e}tale topology.

\medskip
 We now let $Sh_{et}(Sm/K)$
denote the category of sheaves (of sets) for the \'{e}tale topology on $Sm/K$. Further, we
let $Sh_{et}(Cor_K)$ denote the full subcategory of $Presh(Cor_K)$ consisting of
presheaves with transfers that are sheaves (of abelian groups) for the \'{e}tale site of $Sm/K$. The objects of the category $Sh_{et}(Cor_K)$ are referred to as sheaves with transfers. We now define the functor
\begin{equation}
Cor_{et}:Presh(Sm/K)\longrightarrow Sh_{et}(Cor_K)\qquad Cor_{et}(\mathcal F):=
(Cor(\mathcal F))_{et}
\end{equation} From the discussion above, we know that the \'{e}tale sheafification
$Cor_{et}(\mathcal F)$ 
of $Cor(\mathcal F)$ actually defines a sheaf with transfers. In particular, $Cor_{et}:
Presh(Sm/K)\longrightarrow Sh_{et}(Cor_K)$ restricts to a functor on the full subcategory 
$Sh_{et}(Sm/K)$ of $Presh(Sm/K)$ which we continue to denote by $Cor_{et}$. We now have the following result.

\medskip
\begin{thm}  The functor $Cor_{et}:Sh_{et}(Sm/K)\longrightarrow Sh_{et}(Cor_K)$ is left adjoint
to the forgetful functor, i.e., for any $\mathcal F\in Sh_{et}(Sm/K)$, $\mathcal 
G\in Sh_{et}(Cor_K)$, we have natural isomorphisms:
\begin{equation}
Mor_{Sh_{et}(Cor_K)}(Cor_{et}(\mathcal F),\mathcal G)\cong Mor_{Sh_{et}(Sm/K)}(\mathcal F,
\mathcal G)
\end{equation}

\end{thm}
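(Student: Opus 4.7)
The plan is to obtain the desired adjunction by composing three previously established adjunctions: the sheafification adjunction for presheaves with transfers, the adjunction of Proposition \ref{P2}, and the fully faithful embedding of sheaves into presheaves on the sets side. Unwinding the definition $Cor_{et}(\mathcal F)=(Cor(\mathcal F))_{et}$, the goal becomes a natural isomorphism
\begin{equation*}
Mor_{Sh_{et}(Cor_K)}((Cor(\mathcal F))_{et},\mathcal G)\cong Mor_{Sh_{et}(Sm/K)}(\mathcal F,\mathcal G).
\end{equation*}

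First I would invoke the fact (cited from \cite[$\S$ 6]{WMV} in the discussion preceding the statement) that the étale sheafification of a presheaf with transfers canonically inherits transfers and the natural map $Cor(\mathcal F)\to (Cor(\mathcal F))_{et}$ serves as the unit of an adjunction between $Presh(Cor_K)$ and $Sh_{et}(Cor_K)$. This yields
\begin{equation*}
Mor_{Sh_{et}(Cor_K)}((Cor(\mathcal F))_{et},\mathcal G)\cong Mor_{Presh(Cor_K)}(Cor(\mathcal F),\mathcal G).
\end{equation*}
Next I would apply Proposition \ref{P2} directly to rewrite the right hand side as $Mor_{Presh(Sm/K)}(\mathcal F,\mathcal G)$, where $\mathcal G$ is viewed as a presheaf of sets on $Sm/K$ via the forgetful functor (composing the forgetful functors $Sh_{et}(Cor_K)\to Presh(Cor_K)\to Presh(Sm/K)$).

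Finally, since $\mathcal F\in Sh_{et}(Sm/K)$ and $\mathcal G$ is a sheaf of abelian groups for the étale topology, its underlying presheaf of sets is also an étale sheaf on $Sm/K$, so the full faithfulness of the inclusion $Sh_{et}(Sm/K)\hookrightarrow Presh(Sm/K)$ gives $Mor_{Presh(Sm/K)}(\mathcal F,\mathcal G)=Mor_{Sh_{et}(Sm/K)}(\mathcal F,\mathcal G)$. Chaining these three bijections produces the required natural isomorphism, and naturality in both variables is automatic since it holds at each step.

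The main obstacle, or rather the one nontrivial input, is the first step: knowing that the sheafification $(Cor(\mathcal F))_{et}$ really does underlie a sheaf with transfers and that the unit $Cor(\mathcal F)\to (Cor(\mathcal F))_{et}$ enjoys the correct universal property with respect to morphisms of presheaves with transfers into sheaves with transfers. This rests on the Voevodsky-style extension of transfers to the étale sheafification recalled in the paragraph preceding the statement; once this is available, the remainder of the argument is a formal chase of adjunctions and requires no further computation.
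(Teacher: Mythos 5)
Your proposal is correct and follows essentially the same route as the paper: both arguments chain the same three identifications, namely the sheafification adjunction for presheaves with transfers (the paper cites \cite[Corollary 6.18]{WMV} for this step), the adjunction of Proposition \ref{P2}, and the full faithfulness of $Sh_{et}(Sm/K)\hookrightarrow Presh(Sm/K)$. You have also correctly isolated the one nontrivial input, namely that transfers lift to the \'{e}tale sheafification so that $(Cor(\mathcal F))_{et}$ is a sheaf with transfers with the expected universal property.
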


\begin{proof} Since $Sh_{et}(Sm/K)$ is a full subcategory of $Presh(Sm/K)$, we have \begin{equation}\label{v45}
Mor_{Sh_{et}(Sm/K)}(\mathcal F,\mathcal G)= Mor_{Presh(Sm/K)}(\mathcal F,\mathcal G)
\end{equation} Now, using the adjointness of functors in Proposition \ref{P2}, we obtain natural 
isomorphisms
\begin{equation}\label{v46}
Mor_{Presh(Cor_K)}(Cor(\mathcal F),\mathcal G)\cong Mor_{Presh(Sm/K)}(
\mathcal F,\mathcal G)
\end{equation} Finally, using \cite[Corollary 6.18]{WMV}, we have natural isomorphisms:
\begin{equation}\label{v47}
Mor_{Sh_{et}(Cor_K)}(Cor_{et}(\mathcal F),\mathcal G)\cong Mor_{Presh(Cor_K)}(Cor(\mathcal F),\mathcal G)
\end{equation} Combining \eqref{v45}, \eqref{v46} and \eqref{v47}, we have the result. 

\end{proof}

\medskip
We will now apply the $Cor$ functor to ind-schemes. By an ind-scheme over $K$, we will mean an ind-object
of the category $Sm/K$ (see Definition \ref{D31pz} below). For a given scheme $X\in Sm/K$, the contravariant functor $C_X:
Cor_K
\longrightarrow \mathbf{Ab}$ associates
to any $Y\in Sm/K$ the collection of finite correspondences from $Y$  to $X$. Then, it is clear that for an ind-scheme
$\mathcal X$ and any object $Y\in Sm/K$, the abelian group $Cor(\mathcal X)(Y)$ may be seen as the collection of ``finite correspondences
from $Y$ to the ind-scheme $\mathcal X$''. Further, we know that for a scheme $X\in Sm/K$, the presheaf with
transfers $C_X$ is actually a sheaf for the \'{e}tale topology on $Sm/K$ (see \cite[Lemma 6.2]{WMV}). We will now show that the same holds 
for ind-schemes, i.e., for an ind-scheme $\mathcal X$, the presheaf with transfers $Cor(\mathcal X)$ is actually
an \'{e}tale sheaf on $Sm/K$.

\medskip
\begin{defn}\label{D31pz}  Let $\mathcal X$ be a presheaf of sets on $Sm/K$. We will say that $\mathcal X$
is an ind-scheme if $\mathcal X$ is a flat functor from $(Sm/K)^{op}$ to the category of 
sets, i.e., the inductive system
\begin{equation}\label{zqap}
Dir(\mathcal X)=\{(X,x)| \textrm{ }x\in \mathcal X(X),X\in Sm/K\}
\end{equation} is filtered. Equivalently, $\mathcal X$ is said to be an ind-scheme if the presheaf $\mathcal X$  can be represented
as a filtered colimt of representable presheaves on $Sm/K$. We will denote the category of ind-schemes by $Ind-(Sm/K)$. 

\end{defn}

\medskip
\begin{lem}\label{0L5} Let $\{\mathcal F_i\}_{i\in I}$ be a filtered inductive system
of objects in $Sh_{et}(Cor_K)$. Let $\mathcal F$ denote the presheaf colimit of
the system, i.e., 
\begin{equation}\label{213br}
\mathcal F(X):=colim_{i\in I}\mathcal F_i(X) \qquad \forall \textrm{ }X\in Cor_K
\end{equation} Then, $\mathcal F$ is a sheaf for the \'{e}tale topology on
$Sm/K$, in other words $\mathcal F\in Sh_{et}(Cor_K)$. 
\end{lem}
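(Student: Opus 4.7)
The plan is to verify the étale sheaf condition for $\mathcal F$ directly by exploiting two facts: that every $X \in Sm/K$ is Noetherian (hence quasi-compact), and that filtered colimits of abelian groups commute with finite limits. Since the colimit in \eqref{213br} is taken inside $Presh(Cor_K)$, the transfer structure on $\mathcal F$ is automatic; the only non-trivial task is to show that $\mathcal F$ satisfies descent for étale covers on $Sm/K$.

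First I would fix an object $X \in Sm/K$ and an étale cover $\{U_\alpha \to X\}_{\alpha \in A}$. Because $X$ is Noetherian and each $U_\alpha \to X$ is open in the constructible sense, I can pass to a finite subcover $\{U_1,\dots,U_n\}$; writing $U := \coprod_{k=1}^n U_k$, it suffices to check that
\begin{equation*}
\mathcal F(X) \longrightarrow \mathcal F(U) \rightrightarrows \mathcal F(U \times_X U)
\end{equation*}
is an equalizer diagram of abelian groups. Here I am using the standard reduction: an étale separated presheaf is a sheaf as soon as it satisfies descent for covers by a single étale surjection arising from a finite subcover.

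Next I would observe that for each index $i \in I$, the corresponding diagram
\begin{equation*}
\mathcal F_i(X) \longrightarrow \mathcal F_i(U) \rightrightarrows \mathcal F_i(U \times_X U)
\end{equation*}
is an equalizer, since $\mathcal F_i \in Sh_{et}(Cor_K)$. Taking the filtered colimit over $i \in I$ and using that filtered colimits in $\mathbf{Ab}$ are exact (in particular commute with finite limits such as equalizers), I conclude that the analogous diagram for $\mathcal F$ is again an equalizer. Hence $\mathcal F$ satisfies the sheaf condition for every étale cover of every $X \in Sm/K$, so $\mathcal F \in Sh_{et}(Cor_K)$.

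The only conceptual obstacle, which I expect to be mild here, is the reduction to finite covers: one must be a bit careful that the finite subcover argument applies in the étale site of $Sm/K$ rather than just in the small étale site of a fixed scheme. This is handled by the observation that the defining schemes in $Sm/K$ are separated and of finite type over $K$, hence Noetherian and quasi-compact, so each étale covering family admits a finite refinement. Once that reduction is in hand, the proof is essentially the formal statement that filtered colimits of sheaves of abelian groups on a Noetherian site are computed pointwise.
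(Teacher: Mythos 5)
Your proof is correct and follows essentially the same route as the paper's: reduce to a finite covering family using that smooth schemes over $K$ are of finite type and hence quasi-compact, then use that filtered colimits of abelian groups commute with finite limits (and finite products) to pass the sheaf condition from the $\mathcal F_i$ to $\mathcal F$. The only cosmetic difference is that you bundle the finite subcover into a single morphism $\coprod_k U_k \to X$ (implicitly using additivity of presheaves with transfers), whereas the paper keeps the finite products explicit; both are equivalent.
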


\begin{proof} Let $\{X_j\longrightarrow X\}_{j\in J}$ be an \'{e}tale covering
of $X$ in $Sm/K$. We need to show that
\begin{equation}\label{214br}
\mathcal F(X)=lim(\prod_{j\in J}\mathcal F(X_j)\overset{\longrightarrow}{\underset{
\longrightarrow}{ }}\prod_{j,j'\in J}\mathcal F(X_j\times_XX_{j'}))
\end{equation} Since $X$ is smooth, it follows that $X$ is of finite type over $K$ 
(see \cite[$\S$ 17.3.1]{EGA4}) and hence quasi-compact. Therefore, we may assume
that $J$ is finite. Now, since each $\mathcal F_i$ is an \'{e}tale sheaf, we have:
\begin{equation}\label{215br}
\mathcal F_i(X)=lim(\prod_{j\in J}\mathcal F_i(X_j)\overset{\longrightarrow}{\underset{
\longrightarrow}{ }}\prod_{j,j'\in J}\mathcal F_i(X_j\times_XX_{j'}))
\end{equation} for each $i\in I$. Since $J$ is finite, the products appearing in \eqref{215br} are finite.
Now, since filtered colimits commute with finite limits and finite products in the category
of abelian groups, it follows from \eqref{215br} that
\begin{equation}
colim_{i\in I}\mathcal F_i(X)=lim(\prod_{j\in J} colim_{i\in I}\mathcal F_i(X_j)\overset{\longrightarrow}{\underset{
\longrightarrow}{ }}\prod_{j,j'\in J}colim_{i\in I}\mathcal F_i(X_j\times_XX_{j'}))
\end{equation} The result of \eqref{214br} is now clear from the definition
of $\mathcal F$ in \eqref{213br}.
\end{proof}

\medskip
\begin{rem}\emph{The result analogous to Lemma \ref{0L5} also holds for the Nisnevich site: let $\{\mathcal F_i\}_{i\in I}$ be a filtered inductive system
of objects in $Presh(Cor_K)$. Suppose that each $\mathcal F_i$ is also a sheaf for the Nisnevich topology on 
$Sm/K$ and let $\mathcal F$ be the presheaf colimit as in \eqref{213br}. Then, $\mathcal F$ is  a sheaf for the Nisnevich topology
on $Sm/K$. The key step in the proof
of Lemma \ref{0L5} is the fact that every \'{e}tale covering of a scheme $X$ in $Sm/K$ has a finite subcover. In the Nisnevich
case, we can proceed as follows: suppose that $\{X_j\longrightarrow X\}_{j\in J}$ is a Nisnevich covering 
in $Sm/K$. Since $X$ is smooth and hence of finite type over the field $K$, $X$ is Noetherian. Hence, any Nisnevich
covering of $X$ has a finite subcover (see, for instance, \cite[Proposition A.1.4]{Hadian}). Then, the rest of the 
proof of Lemma \ref{0L5} goes through without change in the Nisnevich case.}
\end{rem}

\medskip
\begin{thm}\label{P3} Let $\mathcal X$ be an ind-scheme, i.e., an ind-object of the
category $Sm/K$. Then, $Cor(\mathcal X)\in Presh(Cor_K)$ is actually
a sheaf for the \'{e}tale topology on $Sm/K$, i.e., $Cor(\mathcal X)=Cor_{et}(\mathcal X)$.
\end{thm}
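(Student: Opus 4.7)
The plan is to combine three ingredients already established: (i) the description of an ind-scheme as a filtered colimit of representables, (ii) the identification $Cor(h_X)=C_X$ together with the fact that each $C_X$ is an étale sheaf with transfers, and (iii) Lemma \ref{0L5}, which tells us that filtered presheaf colimits of étale sheaves with transfers stay in $Sh_{et}(Cor_K)$.

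First, I would unwind the definition in \eqref{2.4}. By Definition \ref{D31pz}, for an ind-scheme $\mathcal X$ the indexing category $Dir(\mathcal X)$ is filtered, so
\begin{equation*}
Cor(\mathcal X)\ =\ \underset{(X,x)\in Dir(\mathcal X)}{colim}\ C_X,
\end{equation*}
where the colimit is taken in $Presh(Cor_K)$, and therefore is computed objectwise as a filtered colimit of abelian groups. Explicitly, for any $Y\in Cor_K$,
\begin{equation*}
Cor(\mathcal X)(Y)\ =\ \underset{(X,x)\in Dir(\mathcal X)}{colim}\ Cor(Y,X).
\end{equation*}
This is precisely the ``presheaf colimit'' appearing in \eqref{213br}. (Alternatively, one can invoke Proposition \ref{P2}: since $Cor$ is a left adjoint it preserves all colimits, and $\mathcal X=\underset{(X,x)\in Dir(\mathcal X)}{colim}\, h_X$ by \eqref{2.3}, so $Cor(\mathcal X)=\underset{}{colim}\,Cor(h_X)=\underset{}{colim}\,C_X$ by Proposition \ref{P1}.)

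Next, each $C_X$ lies in $Sh_{et}(Cor_K)$ by the cited result \cite[Lemma 6.2]{WMV}. Since $Dir(\mathcal X)$ is filtered, Lemma \ref{0L5} applies directly to the system $\{C_X\}_{(X,x)\in Dir(\mathcal X)}$, and we conclude that the presheaf colimit $Cor(\mathcal X)$ is itself a sheaf for the étale topology on $Sm/K$. In particular, the canonical morphism $Cor(\mathcal X)\longrightarrow Cor_{et}(\mathcal X)$ to its étale sheafification is an isomorphism, giving $Cor(\mathcal X)=Cor_{et}(\mathcal X)$.

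The only mild subtlety, and the point to spell out carefully, is that the colimit defining $Cor(\mathcal X)$ in \eqref{2.4} coincides with the pointwise presheaf colimit used in Lemma \ref{0L5}; this is automatic because colimits in $Presh(Cor_K)$ are computed pointwise in $\mathbf{Ab}$. Given this, the argument is essentially a direct appeal to Lemma \ref{0L5}, and there is no serious obstacle.
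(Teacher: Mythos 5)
Your argument is essentially identical to the paper's proof: both unwind $Cor(\mathcal X)=\underset{(X,x)\in Dir(\mathcal X)}{colim}\ C_X$, note that $Dir(\mathcal X)$ is filtered by Definition \ref{D31pz} and that each $C_X$ is an \'{e}tale sheaf by \cite[Lemma 6.2]{WMV}, and then apply Lemma \ref{0L5}. Your extra remark that the colimit in $Presh(Cor_K)$ is computed pointwise is a harmless clarification that the paper leaves implicit.
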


\begin{proof} We consider any ind-object $\mathcal X$ of the category $Sm/K$. Then, by definition,
we know that
\begin{equation}\label{kohli}
 Cor(\mathcal X)=\underset{(X,x)\in Dir(\mathcal X)}{colim}\textrm{ }C_X
\end{equation} Since $\mathcal X$ is an ind-scheme, we know from Definition \ref{D31pz} that
the inductive system $Dir(\mathcal X)$ is filtered. Additionally, we know that for any
$X\in Sm/K$, $C_X$ is actually an \'{e}tale sheaf on $Sm/K$ (see \cite[Lemma 6.2]{WMV}).
Applying Lemma \ref{0L5}, it now follows that the presheaf $Cor(\mathcal X)$ defined
by the filtered colimit  in \eqref{kohli} is a sheaf for the  \'{e}tale topology
on $Sm/K$. This proves the result. 
\end{proof}

\medskip

\medskip

\section{Isomorphisms of Chow groups and $\mathbf{Cor}$ on ind-schemes}

\medskip

\medskip

\medskip
Let $\mathcal X$ be an ind-scheme over $K$, i.e., $\mathcal X$ is a filtered colimit of representable
presheaves on $Sm/K$. 
In this section, we introduce and study the Chow groups of the ind-scheme $\mathcal X$. More generally,
for any presheaf $\mathcal F$ of sets on $Sm/K$, we define Chow groups $CH^p(\mathcal F)$, $p\in \mathbb Z$ by 
modifying the definition in \cite{AB6} and in the spirit of the bivariant Chow theory of
Fulton and MacPherson \cite{FM}. In particular, when $\mathcal F=h_X$, the representable presheaf
on $Sm/K$ corresponding to an object $X\in Sm/K$, we show that the usual Chow group
of $X$ can be recovered as:
\begin{equation}\label{KSinha}
CH^p(h_X)\cong CH^p(X) \qquad\forall \textrm{ }p\in \mathbb Z
\end{equation} However, when the scheme $X\in Sm/K$ is considered as an object of 
category $Cor_K$, the representable presheaf on $Cor_K$ corresponding to $X$ is
the presheaf with transfers $C_X$:
\begin{equation}
C_X:Cor_K\longrightarrow \mathbf{Ab}\qquad Y\mapsto Cor(Y,X)\qquad \forall\textrm{ }Y\in Cor_K
\end{equation} Therefore, we would like to describe the Chow group of $X$ also in terms of the ``Chow group'' 
of the representable presheaf with transfers $C_X$. In order to do this, we introduce in Definition 
\ref{D32} the notion of Chow groups $\mathcal{CH}^p(\mathcal G)$, $p\in \mathbb Z$ for any
presheaf with transfers $\mathcal G$. Once again, our definition of the groups 
$\mathcal{CH}^p(\mathcal G)$ is in the spirit of the bivariant Chow theory of Fulton and 
MacPherson. Then, our first main result in this section is the following: for any
$X\in Sm/K$, we have  isomorphisms:
\begin{equation}\label{KSinha2}
CH^p(X)\cong CH^p(h_X)\cong \mathcal{CH}^p(C_X)\qquad\forall \textrm{ }p\in \mathbb Z
\end{equation} Thereafter, we proceed   to prove the analogue of \eqref{KSinha2} for an ind-scheme 
$\mathcal X$. Accordingly, we consider the Chow group of the presheaf with transfers 
$Cor(\mathcal X)$ associated to $\mathcal X$ and show that we have isomorphisms:
\begin{equation}\label{kitty}
CH^p(\mathcal X)\cong \mathcal{CH}^p(Cor(\mathcal X))\qquad \forall \textrm{ }p\in \mathbb Z
\end{equation} where $CH^p(\mathcal X)$ is the Chow group of $\mathcal X$ considered
as a presheaf of sets on $Sm/K$. 
Later on, in Section 4, we will show that the methods in this section
can be used to extend the filtration of Saito \cite{Saito} to the Chow groups of an arbitrary presheaf
$\mathcal F$ on $Sm/K$ as well as to the Chow groups of an arbitrary presheaf with transfers $\mathcal G$. In particular, for an 
ind-scheme $\mathcal X$, this gives us a filtration on $CH^p(\mathcal X)$ and on $\mathcal{CH}^p(Cor(\mathcal X))$ respectively. 
If the ind-scheme $\mathcal X$ is also ind-proper, we will show that the isomorphism $CH^p(\mathcal X)\cong \mathcal{CH}^p(Cor(\mathcal X))$ in \eqref{kitty} is actually
a filtered isomorphism with respect to these filtrations. 

\medskip
We start by defining the Chow groups of a presheaf $\mathcal F$ of sets on $Sm/K$. Given a morphism
$f:X\longrightarrow Y$ of schemes, Fulton and MacPherson \cite{FM} have defined the bivariant
Chow groups $CH^p(f:X\longrightarrow Y)$, $\forall$ $p\in \mathbb Z$. In particular, when
$X=Y$ and $f=1_X$ is the identity on $X$, we have the operational Chow groups $CH^p(1_X:X\longrightarrow X)$
of $X$. For an exposition on this theory, we refer the reader to \cite[$\S$ 17]{Fulton}. We will now
adapt their definition to obtain ``Chow groups'' for a presheaf $\mathcal F$ of sets on
$Sm/K$.

\medskip
\begin{defn}\label{D31}
 Let $\mathcal F$ be a presheaf of sets on $Sm/K$. For any scheme $X\in Sm/K$, let
$h_X$ be the presheaf on $Sm/K$ represented by $X$. Then, for any $p\in\mathbb Z$, a class
$c\in CH^p(\mathcal F)$ is that which associates to each morphism $x:h_X\longrightarrow \mathcal F$ of presheaves on $Sm/K$, a family of homomorphisms
\begin{equation}c_x^k:CH^k(X) \longrightarrow CH^{k+p}(X)\qquad \forall\textrm{ }k\in\mathbb Z\end{equation} such that the induced morphism $c_x=\underset{k\in \mathbb Z}{
\bigoplus}c_x^k:CH^*(X)=\underset{k\in \mathbb Z}{\bigoplus}CH^k(X)
\longrightarrow CH^*(X)=\underset{k\in \mathbb Z}{\bigoplus}CH^k(X)$
satisfies the following properties:

\medskip
\noindent (a) (well behaved with respect to flat pullbacks) Let $f:Y\longrightarrow X$ be a flat  morphism in $Sm/K$. Then, for any $\alpha\in CH^*(X)$, we have $
f^*(c_x(\alpha))=c_{x\circ f}(f^*(\alpha))$.

\medskip
\noindent (b) (well behaved with respect to proper pushforwards) Let $f:Y\longrightarrow X$ be a proper morphism in $Sm/K$. Then, for any $\alpha\in CH^*(Y)$,  we have
$f_*(c_{x\circ f}(\alpha))=c_{x}(f_*(\alpha))$.

\medskip
\noindent (c) (well behaved with respect to refined Gysin morphisms) Let $i:Y'\longrightarrow X'$ be a  regular imbedding in $Sm/K$  
and $g:X\longrightarrow X'$ a morphism in $Sm/K$ forming 
a fibre square
\begin{equation}\begin{CD}
Y @>g'>> Y' \\
@Vi'VV @ViVV \\
X @>g>> X' \\
\end{CD}
\end{equation} Then, if $Y$ lies in $Sm/K$, for any $\alpha\in CH^*(X)$, we have $
i^!(c_x(\alpha))=c_{x\circ i'}(i^!(\alpha))$. 

\medskip
 For the sake of convenience, when the morphism $x:h_X\longrightarrow \mathcal F$
is clear from context, we will often omit the subscript and write the morphism
$c_x$ simply as $c$.  
\end{defn}

\medskip
\begin{rem}\label{R3.2} \emph{Since we are dealing with smooth schemes, every morphism
$f:Y\longrightarrow X$ in $Sm/K$ can be factorized (see \cite[Appendix B.7.6]{Fulton}) as $f=p_X\circ i_f$, where $i_f:=(1\times f):
Y\longrightarrow Y\times X$ is a regular imbedding and $p_X:Y\times X\longrightarrow
X$ is the coordinate projection (and hence a flat morphism). Accordingly, we may write
\begin{equation} \label{4.3pab} f^*=i_f^!\circ p_X^*:CH^*(X)\longrightarrow CH^*(Y)
\end{equation} From (a) and
(c) in  Definition \ref{D31}, we know that, for any presheaf $\mathcal F$ on $Sm/K$, the morphisms induced by the classes 
$c\in CH^p(\mathcal F)$ behave well with respect to flat morphisms and regular imbeddings.
Hence, the expression in \eqref{4.3pab} tells us that we may as well remove the requirement of flatness from condition (a) in Definition \ref{D31} and
replace it with the following
condition:}

\emph{(a') Let $f:Y\longrightarrow X$ be a morphism  in $Sm/K$. Then, for any $\alpha\in CH^*(X)$, we have $f^*(c_x(\alpha))=c_{x\circ f}(f^*(\alpha))$.}
\end{rem}

\medskip
From Definition \ref{D31}, we note that if $f:\mathcal G\longrightarrow \mathcal F$
is a morphism of presheaves on $Sm/K$, we have pullbacks $f^*:CH^p(\mathcal F)
\longrightarrow CH^p(\mathcal G)$ defined as follows: for any 
$c\in CH^p(\mathcal F)$, we define $f^*(c)\in CH^p(\mathcal G)$ by setting
\begin{equation}\label{ATCR}
f^*(c)^k_x:=c^k_{f\circ x}:CH^k(X)\longrightarrow CH^{k+p}(X)
\qquad \forall\textrm{ }k\in \mathbb Z
\end{equation} for any morphism $x:h_X\longrightarrow \mathcal G$ with
$X\in Sm/K$. As mentioned before, the Chow groups $CH^p(\mathcal F)$ in Definition \ref{D31} have been
obtained by adapting the definition of the operational Chow group of Fulton and MacPherson \cite{FM} to presheaves. However,
it must be mentioned that when $\mathcal F=h_X$ for some $X\in Sm/K$, the definition of $CH^p(h_X)$ in Definition \ref{D31} differs slightly from the 
original definition of $CH^p(1_X:X\longrightarrow X)$ given by Fulton and MacPherson. In Definition \ref{D31}, the Chow group $CH^p(h_X)$ of the presheaf $h_X$ is 
defined by using morphisms $y:h_Y\longrightarrow h_X$ where $Y$ varies over all schemes in $Sm/K$. However, in the original definition of the operational Chow
group of $X$ in \cite{FM}, one uses morphisms $y:Y\longrightarrow X$ where $Y$ varies over all schemes, i.e., not necessarily  only those in $Sm/K$.  As such, we need to show that for any $X\in Sm/K$, the  Chow group 
of $X$ can be recovered
from Definition \ref{D31} by setting $\mathcal F=h_X$. 

\begin{thm}\label{MSDRNC} Let $X \in Sm/K$ and let $h_X$ denote the presheaf
on $Sm/K$ represented by $X$. Then, we have natural isomorphisms:
\begin{equation}
CH^p(h_X)\cong CH^p(X) \qquad \forall\textrm{ }p\in \mathbb Z
\end{equation} 
\end{thm}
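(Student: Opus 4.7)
The plan is to construct mutually inverse maps $\phi : CH^p(X) \to CH^p(h_X)$ and $\psi : CH^p(h_X) \to CH^p(X)$. Since $X$ is smooth, the Chow ring $CH^*(X)$ acts on the Chow groups of every smooth scheme mapping to $X$ by pullback-and-intersect; that action produces $\phi$, while $\psi$ is simply evaluation at the identity morphism.

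To define $\phi$, given $\alpha \in CH^p(X)$ and any morphism $y : h_Y \to h_X$ in $Presh(Sm/K)$ (equivalently, by Yoneda, a morphism $y : Y \to X$ in $Sm/K$), I set
\[
\phi(\alpha)^k_y : CH^k(Y) \longrightarrow CH^{k+p}(Y), \qquad \beta \longmapsto y^*(\alpha) \cdot \beta.
\]
The pullback and intersection product are well defined because $Y$ and $X$ are smooth. Verifying properties (a'), (b), (c) of Definition \ref{D31} for $\phi(\alpha)$ is routine: (a') amounts to contravariance and multiplicativity of $f^*$, (b) is the projection formula for proper pushforward, and (c) is the compatibility of refined Gysin maps with intersection products on smooth ambient schemes.

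For $\psi$, set $\psi(c) := c^0_{1_X}(1)$, where $1_X : h_X \to h_X$ is the identity morphism (equivalently, the Yoneda class of $1_X : X \to X$) and $1 \in CH^0(X)$ is the unit of the Chow ring. The equality $\psi \circ \phi = \mathrm{id}$ is immediate: $\psi(\phi(\alpha)) = 1_X^*(\alpha) \cdot 1 = \alpha$.

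The substantive content is to show $\phi \circ \psi = \mathrm{id}$, i.e.\ that every $c \in CH^p(h_X)$ is determined by $\alpha := c^0_{1_X}(1)$ via the formula $c^k_y(\beta) = y^*(\alpha) \cdot \beta$ for each $y : Y \to X$ in $Sm/K$ and each $\beta \in CH^k(Y)$. By linearity, it suffices to check this when $\beta = [V]$ for $V \subseteq Y$ an integral closed subvariety of codimension $k$. Since $\mathrm{char}(K) = 0$, Hironaka's resolution of singularities furnishes a proper birational morphism $\pi : \tilde V \to V$ with $\tilde V \in Sm/K$; the composite $\tilde\pi : \tilde V \to Y$ is then a proper morphism in $Sm/K$ with $\tilde\pi_*(1_{\tilde V}) = [V]$. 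Applying axiom (a') to the morphism $y \circ \tilde\pi : \tilde V \to X$ at the input $1 \in CH^0(X)$ gives
\[
c^0_{y \circ \tilde\pi}(1_{\tilde V}) = (y \circ \tilde\pi)^*(\alpha) = \tilde\pi^*\bigl(y^*(\alpha)\bigr),
\]
while axiom (b) applied to $\tilde\pi$ at $1_{\tilde V}$ gives $\tilde\pi_*\bigl(c^0_{y \circ \tilde\pi}(1_{\tilde V})\bigr) = c^k_y([V])$. Combining these with the projection formula for the proper morphism $\tilde\pi$ yields
\[
c^k_y([V]) = \tilde\pi_*\bigl(\tilde\pi^*(y^*(\alpha))\bigr) = y^*(\alpha) \cdot \tilde\pi_*(1_{\tilde V}) = y^*(\alpha) \cdot [V],
\]
as required. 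The only real obstacle is producing a smooth proper cover $\tilde V \to V$ of each subvariety, which is what forces the characteristic zero hypothesis; once this is in hand, the bivariant axioms together with the projection formula do the rest. Naturality of the isomorphism in $X$ is then a formal consequence of the compatibility of intersection and pullback.
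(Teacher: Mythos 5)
Your proposal is correct and follows essentially the same route as the paper: the same two maps ($\phi(\alpha)^k_y(\beta)=y^*(\alpha)\cdot\beta$ and evaluation at the identity on the fundamental class), with the nontrivial direction handled by resolving an integral subvariety $V\subseteq Y$ to a smooth proper cover $\tilde V\to V$ and then combining the pullback axiom, the proper pushforward axiom, and the projection formula. The only detail the paper spells out that you elide is that $\tilde V$ is separated (hence genuinely lies in $Sm/K$), which follows since $\tilde V\to Y$ is proper and $Y$ is separated.
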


\begin{proof} We fix some $p\in \mathbb Z$ and choose any $\alpha\in CH^p(X)$. Then, 
we can define a morphism $S^p:CH^p(X)\longrightarrow CH^p(h_X)$ as follows: given any
morphism $y:h_Y\longrightarrow h_X$ of representable presheaves (corresponding to a morphism
$y:Y\longrightarrow X$ in $Sm/K$), we define the class $S^p(\alpha)\in CH^p(h_X)$
by setting:
\begin{equation}\label{e34}
(S^p(\alpha))^k_y:CH^k(Y)\longrightarrow CH^{k+p}(Y) \qquad 
\beta\mapsto y^*(\alpha)\cdot \beta 
\end{equation} Conversely, we can define a morphism $T^p:CH^p(h_X)
\longrightarrow CH^p(X)$ as follows: for any class $c\in CH^p(h_X)$, we consider
the identity morphism $1:h_X\longrightarrow h_X$ and set:
\begin{equation}\label{e35}
T^p(c):=c_1^0([X])\in CH^p(X) 
\end{equation} From \eqref{e34} and \eqref{e35}, it is clear that $T^p\circ S^p$ is the
identity. 

\medskip
On the other hand, let us choose some $c\in CH^p(h_X)$ and consider a morphism $y:h_Y
\longrightarrow h_X$ of representable presheaves corresponding to a morphism
$y:Y\longrightarrow X$ in $Sm/K$. For some given $k\in \mathbb Z$, we consider
$\gamma\in CH^k(Y)$. In order to show that $S^p\circ T^p$ is the identity, we need to verify
that $c^k_y(\gamma)=y^*(T^p(c))\cdot \gamma$. It suffices to show this for the case
when $\gamma$ corresponds to an irreducible closed subscheme $V$ of codimension
$k$ in $Y$. Let $i:V\longrightarrow Y$ denote the closed immersion of $V$ into $Y$. 
Since $K$ is a field of characteristic zero, it admits a resolution of singularities and hence
we can choose  a proper birational morphism $\tilde{p}:\tilde{V}\longrightarrow V$ such
that $\tilde{V}$ is smooth. Since $\tilde{p}$ and
the closed immersion $i$ are both proper, the composition
$i\circ \tilde{p}:\tilde{V}\longrightarrow Y$  is proper (and hence separated). Combining
this with the fact that 
$Y$ is separated (since $Y\in Sm/K)$, it follows that $\tilde{V}$ is also separated. Hence,
$\tilde{V}\in Sm/K$.  We now have:
\begin{equation}
\begin{array}{ll}
c^k_y(\gamma)=c^k_y([V]) & = c^k_y((i\circ \tilde{p})_*([\tilde{V}]))
 =(i\circ \tilde{p})_*( c^0_{y\circ i\circ \tilde{p}}([\tilde{V}]))\\
& = (i\circ \tilde{p})_*(c^0_{y\circ i\circ \tilde{p}}((i\circ \tilde{p})^*([Y])))
= (i\circ \tilde{p})_*((i\circ \tilde{p})^*(c^0_y([Y])))\\
& = (i\circ \tilde{p})_*((i\circ \tilde{p})^*(c^0_y([Y]))\cdot [\tilde{V}])\\
&=c^0_y([Y])\cdot (i\circ \tilde{p})_*([\tilde{V}])=c^0_y([Y])\cdot \gamma\\
&=c^0_y(y^*[X])\cdot \gamma = y^*(c^0_1([X]))\cdot \gamma =y^*(T^p(c))\cdot \gamma\\
\end{array}
\end{equation}
This proves the result. 
\end{proof}

\medskip
Let $X$, $Y$ be schemes in $Sm/K$ and let $p_X:Y\times X\longrightarrow X$  and $p_Y:Y\times X\longrightarrow
Y$  be the respective coordinate projections. Let  $W\in Cor(Y,X)$ be an elementary finite correspondence
from $Y$ to $X$.  It follows that
the correspondence $W$ defines pullback morphisms ($\forall$ $p\in \mathbb Z$):
\begin{equation}\label{3e1}
W^*:CH^p(X)\longrightarrow CH^p(Y) \qquad 
\alpha \mapsto p_{Y*}(W\cdot p_X^*(\alpha)) 
\end{equation} Here, we note that when  $X$ is proper, $p_Y:Y\times X\longrightarrow Y$ is proper and the $p_{Y*}$ appearing
in \eqref{3e1} is simply the proper pushforward along $p_{Y}$.  For general $X$, one needs
to observe that $W\cdot p_X^*(\alpha)$ has finite support over $Y$ and hence
the pushforward $p_{Y*}(W\cdot p_X^*(\alpha))$ is still defined in $CH^p(Y)$ (see \cite[Example 2.5]{WMV}). Then, 
\eqref{3e1} gives us morphisms $W^*:CH^p(X)\longrightarrow CH^p(Y)$ for each
$W\in Cor(Y,X)$ and each $p\in \mathbb Z$. When the correspondence $W$
is given by the graph of a  morphism $f:Y\longrightarrow X$ in $Sm/K$, the pullback
morphisms in \eqref{3e1} are identical to the usual pullback on Chow cohomology groups.
We are now ready to define Chow groups for presheaves with transfers.

\medskip

\begin{defn}\label{D32}  
 Let $\mathcal G$ be a presheaf with transfers, i.e., $\mathcal G\in 
Presh(Cor_K)$. For any scheme $X\in Sm/K$, let
$h_X$ be the presheaf on $Sm/K$ represented by $X$.  Then, for any $p\in\mathbb Z$, a class
$c\in \mathcal{CH}^p(\mathcal G)$ is that which associates to each morphism $x:h_X\longrightarrow \mathcal G$ of presheaves on $Sm/K$, a family of homomorphisms
\begin{equation}c_x^k:CH^k(X) \longrightarrow CH^{k+p}(X)\qquad \forall\textrm{ }k\in\mathbb Z\end{equation} such that the induced morphism $c_x=\underset{k\in \mathbb Z}{
\bigoplus}c_x^k:CH^*(X)=\underset{k\in \mathbb Z}{\bigoplus}CH^k(X)
\longrightarrow CH^*(X)=\underset{k\in \mathbb Z}{\bigoplus}CH^k(X)$
satisfies the following properties:

\medskip
\noindent (a) Let $W\in Cor(Y,X)$ be a correspondence from $Y$ to $X$, inducing a morphism
$\mathcal G(W):\mathcal G(X)\longrightarrow \mathcal G(Y)$. The morphism $x:h_X
\longrightarrow \mathcal G$ corresponds to an element $x\in \mathcal G(X)$ and we let $y=\mathcal G(W)(x)
\in \mathcal G(Y)$. Now, the element $y\in \mathcal G(Y)$ corresponds to a morphism
$y:h_Y\longrightarrow \mathcal G$ of presheaves on $Sm/K$. 
Then, for any $\alpha\in CH^*(X)$,  we have 
\begin{equation}
W^*(c_x(\alpha))=c_{y}(W^*(\alpha))
\end{equation} In particular, if $W=\Gamma_f$, the graph
of a morphism $f:Y\longrightarrow X$ in $Sm/K$, we have 
\begin{equation}f^*(c_x(\alpha))
=c_{x\circ f}(f^*(\alpha))
\end{equation} for any $\alpha\in CH^*(X)$. 

\medskip
\noindent (b) Let $f:Y\longrightarrow X$ be a proper morphism in $Sm/K$. Then, for any $\alpha\in CH^*(Y)$, we have
$f_*(c_{x\circ f}(\alpha))=c_{x}(f_*(\alpha))$.

\medskip
\noindent (c) Let $i:Y'\longrightarrow X'$ be a regular imbedding in $Sm/K$  
and $g:X\longrightarrow X'$ a morphism in $Sm/K$ forming 
a fibre square
\begin{equation}\begin{CD}
Y @>g'>> Y' \\
@Vi'VV @ViVV \\
X @>g>> X' \\
\end{CD}
\end{equation} Then, if $Y$ lies in $Sm/K$, for any $\alpha\in CH^*(X)$, we have $
i^!(c_x(\alpha))=c_{x\circ i'}(i^!(\alpha))$. 

\medskip
 For the sake of convenience, when the morphism $x:h_X\longrightarrow \mathcal G$
is clear from context, we will often omit the subscript and write the morphism
$c_x$ simply as $c$.  
\end{defn}

\medskip
\begin{thm}\label{T3.5} Let $X\in Sm/K$ and let $h_X$ be the presheaf on $Sm/K$ 
represented by $X$. Let $C_X$ be the presheaf with transfers defined
by setting $C_X(Y):=Cor(Y,X)$ for any $Y\in Cor_K$. Then, we have natural isomorphisms
\begin{equation}\label{Z3.18}
CH^p(X)\cong CH^p(h_X)\cong \mathcal{CH}^p(C_X) \qquad \forall \textrm{ }p
\in \mathbb Z
\end{equation}
\end{thm}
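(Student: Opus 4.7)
The plan is to establish the two isomorphisms separately. The first, $CH^p(X)\cong CH^p(h_X)$, is exactly Proposition \ref{MSDRNC}, so the main work lies in the second isomorphism $CH^p(h_X)\cong \mathcal{CH}^p(C_X)$ (equivalently $CH^p(X)\cong \mathcal{CH}^p(C_X)$). I will construct explicit mutually inverse maps, following the template of Proposition \ref{MSDRNC}.

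First I would define $\tilde T^p:\mathcal{CH}^p(C_X)\to CH^p(X)$ by $\tilde T^p(c):=c_{\Delta_X}^0([X])$, where $\Delta_X:h_X\to C_X$ is the morphism of presheaves on $Sm/K$ corresponding (by Yoneda) to the identity correspondence $1_X\in Cor(X,X)=C_X(X)$. Conversely, I would define $\tilde S^p:CH^p(X)\to \mathcal{CH}^p(C_X)$ by sending $\alpha\in CH^p(X)$ to the class $\tilde S^p(\alpha)$ whose value on a morphism $W:h_Y\to C_X$ (corresponding to $W\in Cor(Y,X)$) is $\tilde S^p(\alpha)_W^k(\beta):=W^*(\alpha)\cdot\beta$ for $\beta\in CH^k(Y)$, with $W^*$ the correspondence pullback of \eqref{3e1}. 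I would verify that $\tilde S^p(\alpha)$ satisfies Definition \ref{D32}: (b) follows from the projection formula for proper pushforward, (c) from compatibility of refined Gysin morphisms with the intersection product, and (a) from the functoriality $(W\circ U)^*=U^*\circ W^*$ of correspondence composition. The composition $\tilde T^p\circ\tilde S^p=\mathrm{id}$ is immediate from $\Delta_X^*=\mathrm{id}$: $\tilde T^p(\tilde S^p(\alpha))=\Delta_X^*(\alpha)\cdot [X]=\alpha$.

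For the reverse composition $\tilde S^p\circ\tilde T^p=\mathrm{id}$, I would mirror the main computation in Proposition \ref{MSDRNC}. Given $c\in \mathcal{CH}^p(C_X)$, $W\in Cor(Y,X)$, and $\beta=[V]\in CH^k(Y)$ with $V\subset Y$ irreducible of codimension $k$, apply resolution of singularities to obtain a proper morphism $f=i\circ\tilde p:\tilde V\to Y$ with $\tilde V\in Sm/K$, $f_*([\tilde V])=[V]$, and $f^*([Y])=[\tilde V]$. Successively apply property (b) of Definition \ref{D32} along $f$, the graph case of property (a) to exchange $f^*$ with $c$, and the projection formula to derive the intermediate identity $c_W^k([V])=c_W^0([Y])\cdot[V]$. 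Finally, apply the full correspondence form of property (a) with $x=\Delta_X$ and the correspondence $W$ (using $1_X\circ W=W$ in $Cor_K$ to identify the resulting $y$ with $W$ itself) to rewrite $c_W^0([Y])$ in terms of $W^*(c_{\Delta_X}^0([X]))=W^*(\tilde T^p(c))$, thereby obtaining $c_W^k([V])=W^*(\tilde T^p(c))\cdot[V]$.

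The main obstacle I anticipate is the final step of rewriting $c_W^0([Y])$ as $W^*(\tilde T^p(c))$ for general correspondences $W$. In the setting of Proposition \ref{MSDRNC}, the analogous identity $c_y^0([Y])=y^*(c_1^0([X]))$ for a morphism $y:Y\to X$ came directly from the graph case of property (a') together with $y^*([X])=[Y]$; for arbitrary $W\in Cor(Y,X)$, the correspondence pullback $W^*([X])\in CH^0(Y)$ is no longer simply $[Y]$, so the identification demands careful handling of property (a) of Definition \ref{D32} in its general (non-graph) form. The symmetric difficulty — checking that the candidate formula $W^*(\alpha)\cdot\beta$ defining $\tilde S^p(\alpha)$ satisfies Definition \ref{D32}(a) for arbitrary correspondences — is the same technical point from the other direction, and is precisely what makes Definition \ref{D32}(a) a substantive extension of Definition \ref{D31}(a).
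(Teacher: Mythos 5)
Your proposal is correct and takes essentially the same approach as the paper: the paper's proof of Proposition \ref{T3.5} constructs exactly your two maps (its $T^p$ is your $\tilde S^p$, and your $\tilde T^p$ is its $S^p$ followed by evaluation at the identity on $[X]$), and it verifies the nontrivial composite by the same resolution-of-singularities computation reducing to $c^l_y(\gamma)=c^0_y([Y])\cdot\gamma$. The obstacle you flag at the end is handled in the paper by applying Definition \ref{D32}(a) with $x=\Delta$ and $y=C_X(W)(\Delta)=W$, yielding $W^*(c^0_\Delta([X]))=c^0_W(W^*([X]))=c^0_W([Y])$, so your identification of this non-graph use of condition (a) (and of the value of $W^*([X])$) as the one delicate point is precisely where the paper's argument does its work.
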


\begin{proof} Since $X\in Sm/K$, we already know from Proposition \ref{MSDRNC} that
$CH^p(X)\cong CH^p(h_X)$. Let $c\in \mathcal{CH}^p(C_X)$. We define a morphism $S^p:\mathcal{CH}^p
(C_X)\longrightarrow CH^p(h_X)$ as follows: for any morphism $f:Y\longrightarrow X$ inducing
a morphism $f:h_Y\longrightarrow h_X$, 
we define the class $S^p(c)\in CH^p(h_X)$ by setting:
\begin{equation}\label{namo}
(S^p(c))^k_f:=c^k_{\Delta\circ f}:CH^k(Y)\longrightarrow CH^{k+p}(Y) 
\end{equation} where $\Delta:h_X\longrightarrow C_X$ is the morphism of presheaves
induced by the diagonal correspondence $\Delta$ from $X$ to $X$. Conversely, choose 
$\alpha\in CH^p(X)\cong CH^p(h_X)$. We define a morphism 
$T^p:CH^p(h_X)\longrightarrow \mathcal{CH}^p(C_X)$ as follows: for any morphism
$y:h_Y\longrightarrow C_X$ of presheaves defining an element $y\in Cor(Y,X)=C_X(Y)$, we set:
\begin{equation}\label{lka}
(T^p(\alpha))^k_y:CH^k(Y)\longrightarrow CH^{k+p}(Y)\qquad 
\beta\mapsto y^*(\alpha)\cdot \beta
\end{equation} From these definitions and the proof of the isomorphism
$CH^p(X)\cong CH^p(h_X)$ in Proposition \ref{MSDRNC}, it is clear that
the composition $S^p\circ T^p$ is the identity. 

\medskip
Let us choose  a morphism $y:h_Y
\longrightarrow C_X$ of presheaves and $\gamma\in CH^l(Y)$ for some given 
$l\in \mathbb Z$. Then, $y$ may be seen as an element of $C_X(Y)$, i.e., a correspondence
from $Y$ to $X$. It follows from \eqref{namo} and \eqref{lka} and the description
of the isomorphism  $CH^p(X)\cong CH^p(h_X)$ in the proof of Proposition \ref{MSDRNC}
that
\begin{equation}\label{shivraj}
(T^p\circ S^p)(c)^l_y(\gamma)=y^*(c_\Delta^0([X]))\cdot \gamma
\end{equation} Since $C_X$ is a presheaf with transfers, the correspondence $y
\in Cor(Y,X)$ 
induces a morphism $C_X(y):C_X(X)\longrightarrow C_X(Y)$ and it is clear that
$C_X(y)(\Delta)=y$. Hence, from condition (a) in Definition \ref{D32}, it follows that
\begin{equation}\label{raman}
y^*(c_\Delta^0([X]))=c_y^0(y^*([X]))=c_y^0([Y]) \in CH^p(Y)
\end{equation} From \eqref{shivraj} and \eqref{raman}, it now follows that 
\begin{equation}\label{manohar}
(T^p\circ S^p)(c)^l_y(\gamma)=c_y^0([Y])\cdot \gamma
\end{equation} Now, for sake of simplicity, we may assume that $\gamma
\in CH^l(Y)$ corresponds to an irreducible closed subscheme $V$ of codimension $l$ admitting a closed
immersion $i:V\longrightarrow Y$. Since 
$K$ is a field of characteristic zero, using resolution of singularities, we can 
obtain a proper birational morphism $\tilde{p}:\tilde{V}\longrightarrow V$
such that $\tilde{V}$ is smooth. Then, as in the proof of Proposition 
\ref{MSDRNC}, it follows that $\tilde{V}\in Sm/K$. Then, 
the class $c\in \mathcal{CH}^p(C_X)$ induces morphisms 
\begin{equation}
c^l_y:CH^l(Y)\longrightarrow CH^{l+p}(Y)
\end{equation} and in particular, we have:
\begin{equation}\label{shettar}
\begin{array}{ll}
c^l_y(\gamma)= c^l_y([V]) & = c^l_y((i\circ \tilde{p})_*([\tilde{V}])) =(i\circ \tilde{p})_*
c^0_{y\circ i\circ \tilde{p}}([\tilde{V}])\\
& =(i\circ \tilde{p})_*
c^0_{y\circ i\circ \tilde{p}}((i\circ \tilde{p})^*([Y])) \\
& = (i\circ \tilde{p})_*((i\circ \tilde{p})^*(c_y^0([Y]))\cdot [\tilde{V}])\\
& =c_y^0([Y])\cdot  (i\circ \tilde{p})_*([\tilde{V}]) = c_y^0([Y])\cdot \gamma
\end{array}
\end{equation} From \eqref{manohar} and \eqref{shettar}, it follows that
$(T^p\circ S^p)(c)=c\in \mathcal{CH}^p(C_X)$. Hence, $CH^p(X)\cong CH^p(h_X)
\cong \mathcal{CH}^p(C_X)$, $\forall$ $p\in \mathbb Z$. 
\end{proof}

\medskip
\begin{lem}\label{sinha} (a) Let $\{\mathcal F_i\}_{i\in I}$ be a filtered inductive system of objects in $Presh(Sm/K)$. Let $\mathcal F:=colim_{i\in I}
\mathcal F_i$. Then, we have isomorphisms:
\begin{equation}
CH^p(\mathcal F)\cong lim_{i\in I}CH^p(\mathcal F_i) \qquad \forall \textrm{ }p\in 
\mathbb Z
\end{equation}

\medskip
(b) Let $\{\mathcal G_i\}_{i\in I}$ be a filtered inductive 
system of objects in $Sh_{et}(Cor_K)$. Let $\mathcal G:=colim_{i\in I}
\mathcal G_i$. Then, we have isomorphisms:
\begin{equation}
\mathcal{CH}^p(\mathcal G)\cong lim_{i\in I}\mathcal{CH}^p(\mathcal G_i) \qquad \forall \textrm{ }p\in 
\mathbb Z
\end{equation}

\end{lem}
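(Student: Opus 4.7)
The plan is to prove (a) and (b) by the same template, reducing both to the filteredness of $I$ and to the fact that filtered colimits commute with the section-wise data underlying Definitions \ref{D31} and \ref{D32}.

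For (a), a class $c\in CH^p(\mathcal F)$ is by Definition \ref{D31} a coherent system of homomorphisms indexed by morphisms $x:h_X\longrightarrow \mathcal F$ in $Presh(Sm/K)$. Since filtered colimits in $Presh(Sm/K)$ are computed section-wise, $\mathcal F(X)=colim_{i\in I}\mathcal F_i(X)$ for every $X\in Sm/K$, and Yoneda identifies each morphism $x:h_X\longrightarrow \mathcal F$ with an element of $\mathcal F(X)$ which factors as $\alpha_i\circ x_i$ for some index $i$ and some $x_i:h_X\longrightarrow \mathcal F_i$. The pullbacks $\alpha_i^*$ of \eqref{ATCR} then assemble into a canonical map $\Phi:CH^p(\mathcal F)\longrightarrow lim_{i\in I}CH^p(\mathcal F_i)$. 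To construct an inverse $\Psi$, I take a compatible family $(c_i)\in lim_i CH^p(\mathcal F_i)$ and, for each $x:h_X\longrightarrow \mathcal F$, choose a lift $x_i$ and set $\Psi((c_i))_x^k:=(c_i)_{x_i}^k$. Well-definedness is the only delicate step: if $x_j$ is a second lift, filteredness produces $k\in I$ with arrows $i\to k$, $j\to k$ along which $x_i$ and $x_j$ share a common image $x_k\in \mathcal F_k(X)$, and compatibility of $(c_i)$ forces $(c_i)_{x_i}=(c_k)_{x_k}=(c_j)_{x_j}$. Properties (a')/(b)/(c) of Definition \ref{D31} pass from each $c_i$ to $\Psi((c_i))$ because each condition involves a single auxiliary scheme $Y$ and morphisms $x\circ f$ or $x\circ i'$ that lift to the same index as $x$. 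The identities $\Phi\circ \Psi=\mathrm{id}$ and $\Psi\circ \Phi=\mathrm{id}$ are then immediate from the definitions.

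For (b), the argument runs in parallel, but one must first identify sections of the colimit. The family $\{\mathcal G_i\}$ lies in $Sh_{et}(Cor_K)$, and by Lemma \ref{0L5} the presheaf colimit of the $\mathcal G_i$ is already an \'{e}tale sheaf with transfers; hence it coincides with $\mathcal G=colim_{i\in I}\mathcal G_i$, and $\mathcal G(X)=colim_{i\in I}\mathcal G_i(X)$ section-wise. Every $x:h_X\longrightarrow \mathcal G$ of presheaves on $Sm/K$ therefore lifts to some $x_i\in \mathcal G_i(X)$, and the maps $\Phi$ and $\Psi$ are defined exactly as in (a). The additional ingredient is condition (a) of Definition \ref{D32}: given a correspondence $W\in Cor(Y,X)$, the element $y=\mathcal G(W)(x)$ admits the lift $y_i:=\mathcal G_i(W)(x_i)$ since the structure maps $\mathcal G_i\longrightarrow \mathcal G$ respect transfers. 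Consequently $\Psi((c_i))_x=(c_i)_{x_i}$ and $\Psi((c_i))_y=(c_i)_{y_i}$, and the required equality $W^*(c_x(\alpha))=c_y(W^*(\alpha))$ reduces to the same identity applied to $c_i$. Conditions (b) and (c) of Definition \ref{D32} are verified exactly as in (a).

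The main obstacle is the well-definedness bookkeeping for $\Psi$: one must carefully use filteredness to compare different lifts of a section, and invoke Lemma \ref{0L5} in (b) to ensure the section-wise formula for the colimit. Once that is in place, the verification of Definition \ref{D31} / \ref{D32} for $\Psi((c_i))$ is a routine \emph{lift, apply $c_i$, descend} procedure, and the two constructions $\Phi$ and $\Psi$ are mutually inverse by construction.
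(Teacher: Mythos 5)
Your proposal is correct and follows essentially the same route as the paper: compute the colimit section-wise, lift each $x:h_X\longrightarrow \mathcal F$ (resp.\ $\mathcal G$) to some index, define the class by $c^k_x:=(c_{i_0})^k_{x_{i_0}}$, verify well-definedness via filteredness of $I$, and invoke Lemma \ref{0L5} in part (b) to justify the section-wise formula. Your explicit verification that conditions (a')/(b)/(c) of Definitions \ref{D31} and \ref{D32} descend to the assembled class (including the transfer condition via $y_i=\mathcal G_i(W)(x_i)$) is a detail the paper leaves implicit, but the argument is the same.
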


\begin{proof}(a)  We fix any $p\in \mathbb Z$. Since $\mathcal F$ is the colimit of the system $\{\mathcal F_i\}_{i\in I}$, 
the canonical morphisms $f_i:\mathcal F_i\longrightarrow \mathcal F$, $i\in I$
induce pullbacks $f_i^*:CH^p(\mathcal F)\longrightarrow CH^p(\mathcal F_i)$ on 
Chow groups as described in \eqref{ATCR}. Now suppose that we have a collection of
classes $c_i\in CH^p(\mathcal F_i)$, $i\in I$ such that $f_{ij}^*(c_j)=c_i$ for any
morphism $f_{ij}:\mathcal F_i\longrightarrow \mathcal F_j$ in the system $I$. 
We will show that the collection $\{c_i\}_{i\in I}$ determines a unique element
$c\in CH^p(\mathcal F)$ such that $f_i^*(c)=c_i$ for each $i\in I$. 

\medskip
In order to define $c\in CH^p(\mathcal F)$, we proceed as follows: we consider a morphism
$x:h_X\longrightarrow \mathcal F$ (for some $X\in Sm/K$) of presheaves determining an element 
$x\in \mathcal F(X)$. By definition of $\mathcal F$, we know that:
\begin{equation}
\mathcal F(X)=colim_{i\in I}\mathcal F_i(X)
\end{equation} Hence, there exists some $i_0\in I$ and some $x_{i_0}\in 
\mathcal F_{i_0}(X)$ such that $f_{i_0}(X)(x_{i_0})=x$ where $f_{i_0}(X):
\mathcal F_{i_0}(X)\longrightarrow \mathcal F(X)$ is the morphism induced by
$f_{i_0}:\mathcal F_{i_0}\longrightarrow \mathcal F$. Then, the class $c\in 
CH^p(\mathcal F)$ is defined by setting:
\begin{equation}
c^k_x:= (c_{i_0})^k_{x_{i_0}}:CH^k(X)\longrightarrow CH^{k+p}(X) \qquad 
\forall \textrm{ }k\in \mathbb Z
\end{equation} We need to check that this class $c$  is well defined, i.e., given $i_0'\in I$ and 
some $x_{i_0'}\in \mathcal F_{i_0'}(X)$ such that $f_{i_0'}(X)(x_{i_0'})=x
\in \mathcal F(X)$, we should have $ (c_{i_0})^k_{x_{i_0}}=
 (c_{i_0'})^k_{x_{i_0'}}:CH^k(X)\longrightarrow CH^{k+p}(X)$, $\forall$ $k
\in \mathbb Z$. Since $I$ is filtered, there exists some $j_0\in I$
 and $x_{j_0}\in \mathcal F_{j_0}(X)$ along with morphisms 
$f_{i_0j_0}:\mathcal F_{i_0}\longrightarrow \mathcal F_{j_0}$, 
$f_{i_0'j_0}:\mathcal F_{i_0'}\longrightarrow \mathcal F_{j_0}$ in the system $I$ such that
$f_{i_0j_0}(X)(x_{i_0})=x_{j_0}=f_{i_0'j_0}(X)(x_{i_0'})$.  Then, 
we know that $f_{i_0j_0}^*(c_{j_0})=c_{i_0}\in CH^p(\mathcal F_{i_0})$
and $f_{i_0'j_0}^*(c_{j_0})=c_{i_0'}\in CH^p(\mathcal F_{i_0'})$. It follows that
\begin{equation}\label{rajnath}
c^k_x= (c_{i_0})^k_{x_{i_0}}=(f_{i_0j_0}^*(c_{j_0}))^k_{x_{i_0}}=(c_{j_0})^k_{x_{j_0}}
=(f_{i_0'j_0}^*(c_{j_0}))^k_{x_{i_0'}}=(c_{i_0'})^k_{x_{i_0'}}
\end{equation} Hence, we have a well defined
class $c\in CH^p(\mathcal F)$ determined by the collection $c_i\in CH^p(\mathcal F_i)$, 
$i\in I$. It follows that $CH^p(\mathcal F)$ is the limit of the system
$\{CH^p(\mathcal F_i)\}_{i\in I}$. 

\medskip
(b) Since $I$ is a filtered inductive system and each $\mathcal G_i\in Sh_{et}(Cor_K)$, 
it follows from Lemma \ref{0L5} that the colimit $\mathcal G\in Sh_{et}(Cor_K)$ is given by:
\begin{equation}\label{kaneenikas}
\mathcal G(X)=colim_{i\in I}\mathcal G_i(X)\qquad \forall\textrm{ }X\in Cor_K
\end{equation} The colimit in \eqref{kaneenikas} is a filtered colimit 
of abelian groups. Additionally, we know that
the underlying set of the filtered colimit of a system of abelian groups is identical
to the filtered colimit of the system of their underlying sets. Hence, the method of proving
part (a) can now be repeated in order to prove (b). 
\end{proof}

\medskip

\begin{thm}\label{HHH}  Let $\mathcal X\in Ind-(Sm/K)$ be an ind-scheme. Then, we have 
isomorphisms:
\begin{equation}\label{Devendra}
CH^p(\mathcal X)\cong \mathcal{CH}^p(Cor(\mathcal X)) \qquad \forall\textrm{ }
p\in \mathbb Z
\end{equation}
\end{thm}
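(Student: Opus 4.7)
The plan is to reduce to the representable case already handled in Proposition \ref{T3.5} by passing to filtered colimits. Since $\mathcal X$ is an ind-scheme, Definition \ref{D31pz} tells us that the indexing category $Dir(\mathcal X)$ is filtered, and by construction
\begin{equation*}
\mathcal X = \underset{(X,x) \in Dir(\mathcal X)}{colim} \textrm{ } h_X, \qquad Cor(\mathcal X) = \underset{(X,x) \in Dir(\mathcal X)}{colim} \textrm{ } C_X.
\end{equation*}
By Proposition \ref{P3}, $Cor(\mathcal X)$ is already an \'etale sheaf with transfers, so thanks to Lemma \ref{0L5} the second colimit, a priori formed in $Presh(Cor_K)$, coincides with the colimit computed in $Sh_{et}(Cor_K)$.

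I then apply Lemma \ref{sinha}: part (a) gives $CH^p(\mathcal X) \cong \underset{(X,x)}{lim}\textrm{ }CH^p(h_X)$, and part (b) gives $\mathcal{CH}^p(Cor(\mathcal X)) \cong \underset{(X,x)}{lim}\textrm{ }\mathcal{CH}^p(C_X)$. For each fixed $(X,x) \in Dir(\mathcal X)$, Proposition \ref{T3.5} supplies mutually inverse isomorphisms $S^p_X: \mathcal{CH}^p(C_X) \to CH^p(h_X)$ and $T^p_X: CH^p(h_X) \to \mathcal{CH}^p(C_X)$. The desired isomorphism \eqref{Devendra} will then follow once these are shown to be natural in the variable $(X,x)$, since the two inverse systems will be isomorphic and hence have isomorphic limits.

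The main obstacle, and essentially all the real content of the argument, is this naturality check. Given a morphism $f:(Y,y) \to (X,x)$ in $Dir(\mathcal X)$ corresponding to $f:Y \to X$ in $Sm/K$ with $\mathcal X(f)(x)=y$, the transition map on the first inverse system is the pullback $f^*: CH^p(h_X) \to CH^p(h_Y)$ defined in \eqref{ATCR}, while the transition map on the second is the pullback $(C_f)^*: \mathcal{CH}^p(C_X) \to \mathcal{CH}^p(C_Y)$ induced by the morphism $C_f: C_Y \to C_X$ of presheaves with transfers associated by Yoneda to the graph $\Gamma_f \in Cor(Y,X) = C_X(Y)$. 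Unwinding the explicit formulas \eqref{namo} and \eqref{lka} defining $S^p_X$ and $T^p_X$, the required commutativity $T^p_Y \circ f^* = (C_f)^* \circ T^p_X$ reduces, on evaluating at a morphism $z:h_W \to C_Y$ (that is, a correspondence $z \in Cor(W,Y)$) and a class $\beta \in CH^k(W)$, to the identity
\begin{equation*}
z^*(f^*(\alpha)) \cdot \beta \; = \; (C_f \circ z)^*(\alpha) \cdot \beta
\end{equation*}
in $CH^{k+p}(W)$. This in turn follows from the functoriality of correspondence pullbacks on Chow cohomology together with the elementary fact that $C_f \circ z: h_W \to C_X$ corresponds to the composite correspondence $\Gamma_f \circ z \in Cor(W,X)$. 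Once naturality is established, passing to limits over $Dir(\mathcal X)$ immediately yields \eqref{Devendra}.
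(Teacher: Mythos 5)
Your proof follows essentially the same route as the paper: write $\mathcal X$ and $Cor(\mathcal X)$ as filtered colimits over $Dir(\mathcal X)$, apply Lemma \ref{sinha}(a) and (b) to identify both Chow groups with inverse limits, and invoke Proposition \ref{T3.5} termwise. The only difference is that you make explicit the naturality of the isomorphisms $CH^p(h_X)\cong \mathcal{CH}^p(C_X)$ in $(X,x)$ (which the paper leaves implicit when it "combines" the three isomorphisms), and your verification of that compatibility via $z^*(f^*(\alpha))=(\Gamma_f\circ z)^*(\alpha)$ is correct.
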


\begin{proof} For any $X\in Sm/K$, let $h_X$ denote the presheaf on $Sm/K$ represented
by $X$. Since $\mathcal X$ is an ind-scheme, the inductive system $Dir(\mathcal X)$ 
as defined in \eqref{zqap} is filtered. We know that
\begin{equation}\label{ksinha}
\mathcal X=\underset{(X,x)\in Dir(\mathcal X)}{colim}\textrm{ } h_X 
\end{equation} From Lemma \ref{sinha}(a), it now follows that
\begin{equation}\label{ksinha1}
CH^p(\mathcal X)\cong\underset{(X,x)\in Dir(\mathcal X)}{lim}\textrm{ } CH^p(h_X)
\end{equation} For any $X\in Cor_K$ let $C_X$ denote the presheaf with transfers
defined by setting $C_X(Y)=Cor(Y,X)$ for any $Y\in Cor_K$. We have noted before that
$C_X$ is an \'{e}tale sheaf with transfers, i.e., $C_X\in Sh_{et}(Cor_K)$. Then, by definition, we know that
\begin{equation}
Cor(\mathcal X)=\underset{(X,x)\in Dir(\mathcal X)}{colim}\textrm{ } C_X
\end{equation} Again, since each $C_X\in Sh_{et}(Cor_K)$ and $Dir(\mathcal X)$ is filtered, it follows from Lemma \ref{sinha}(b) that
\begin{equation}\label{ksinha2}
\mathcal{CH}^p(Cor(\mathcal X))\cong\underset{(X,x)\in Dir(\mathcal X)}{lim}\textrm{ }
\mathcal{CH}^p(C_X)
\end{equation} Finally, from Proposition \ref{T3.5} we know that
$CH^p(h_X)\cong \mathcal{CH}^p(C_X)$ for each $(X,x)\in 
Dir(\mathcal X)$. Combining this with \eqref{ksinha1} and \eqref{ksinha2}, it follows
that $CH^p(\mathcal X)\cong \mathcal{CH}^p(Cor(\mathcal X))$. 

\end{proof}

\medskip

\medskip

\section{Extensions of Saito's filtration and filtered isomorphisms of Chow groups} 

\medskip

\medskip
In this section, we let the ground field $K=\mathbb C$. For any scheme $Y\in SmProj/\mathbb C$, the full subcategory of $Sm/\mathbb C$ consisting of schemes over $\mathbb C$
that are smooth as well as projective, Saito \cite{Saito} has defined a  filtration 
$\{F^rCH^p(Y)\}_{r\geq 0}$ on the Chow groups of $Y$. In this section, we start by showing that we can extend Saito's filtration to the Chow groups of  any presheaf $\mathcal F$ on $Sm/\mathbb C$. In other words,  we will construct
a descending filtration:
\begin{equation}
CH^p(\mathcal F)=F^0CH^p(\mathcal F) \supseteq F^1CH^p(\mathcal F) \supseteq F^2CH^p(\mathcal F)
\supseteq \dots 
\end{equation} on $CH^p(\mathcal F)$ that generalizes Saito's filtration. For any $X\in Sm/\mathbb C$, we know from Proposition 
\ref{MSDRNC} that $CH^p(h_X)\cong CH^p(X)$ $\forall$ $p\geq 0$, where $h_X$ is the presheaf 
on $Sm/\mathbb C$ represented
by $X$. 
Then, in particular, if we set $\mathcal F=h_X$ for some $X\in Sm/\mathbb C$, 
we obtain a filtration on the Chow groups $CH^p(h_X)\cong CH^p(X)$. Thus, we have an extension
of Saito's filtration to the Chow groups of any $X\in Sm/\mathbb C$, i.e., to the Chow groups of any
smooth and separated (but not necessarily projective) scheme over $\mathbb C$. On the other hand, if we choose 
the presheaf to be an ind-scheme $\mathcal X$ over $\mathbb C$, we have a filtration on the Chow groups
$CH^p(\mathcal X)$. 

\medskip
Further, in this section, we define an extension of Saito's filtration to the Chow groups
$\mathcal{CH}^p(\mathcal G)$ of an arbitrary presheaf with transfers $\mathcal G$. In particular, this
gives us filtrations on the Chow groups $\mathcal{CH}^p(C_X)$ for any scheme $X\in Sm/\mathbb C$ 
and  the Chow groups $\mathcal{CH}^p(Cor(\mathcal X))$ for any ind-scheme $\mathcal X\in Ind-(Sm/\mathbb C)$. Then, if
the scheme $X\in Sm/\mathbb C$ is also proper, we show that the isomorphism $CH^p(h_X)\cong \mathcal{CH}^p(C_X)$
from Proposition \ref{T3.5} is actually a filtered isomorphism. Similarly, if the ind-scheme $\mathcal X$ is also ind-proper, we show that
the isomorphism $CH^p(\mathcal X)\cong \mathcal{CH}^p(Cor(\mathcal X))$ from Proposition \ref{HHH} is actually a filtered
isomorphism.

\medskip
We recall here that Saito's filtration \cite{Saito} on the Chow groups of a smooth projective
scheme $Y$ is a candidate for the conjectural
Bloch-Beilinson filtration on Chow groups (see, for instance, Levine \cite[$\S$ 7]{Levine2}). 
In particular, Saito's filtration is well behaved with respect to pullbacks and pushforwards, 
and more generally, with respect to the action of any algebraic correspondence (see 
\cite[Lemma 2.11]{Saito}). Moreover, the filtration is well behaved with respect
to intersection products (see \cite[$\S$ 7.3.2-7.3.3]{Levine2}), i.e., the intersection
product  $CH^p(Y)\otimes CH^q(Y)\longrightarrow CH^{p+q}(Y)$ on the Chow groups of $Y$ induces products:
\begin{equation}\label{ee6.2}
F^rCH^p(Y)\otimes F^sCH^q(Y)\longrightarrow F^{r+s}CH^{p+q}(Y)
\qquad \forall\textrm{ }Y\in SmProj/\mathbb C, \textrm{ }p,q,r,s\geq 0
\end{equation} For more on the conjectures of Beilinson and their connections
to mixed motives, we refer the reader to Beilinson \cite{Beil},  
Jannsen \cite{Jan} and Levine 
\cite{Levine2}. 

\medskip
We are now ready to recall the filtration of Saito \cite[Definition 2.10]{Saito}. As in 
\cite[$\S$ 1]{Saito}, we fix a cohomology theory in the sense of Bloch and Ogus \cite{BO}. 
In other words, for every closed immersion $Z\hookrightarrow X$ of schemes,
we are given a graded abelian group 
$H_Z^*(X)=\underset{i\in \mathbb Z}{\bigoplus}H_Z^i(X)$ satisfying certain compatibility
conditions described in \cite{BO}. When the closed immersion is 
the identity $1:X\longrightarrow X$, $H_X^i(X)$ is denoted simply by $H^i(X)$
for each $i\in \mathbb Z$. Given such a cohomology theory, recall that we have the coniveau
filtration on each $H^i(X)$ defined as follows (for details  see, for instance, \cite[(1.2)]{Saito}):
\begin{equation}
N^tH^i(X):=\underset{Z\hookrightarrow X,codim(Z)\geq t}{\sum}Im(H^i_Z(X)
\longrightarrow H^i(X))\qquad
\forall\textrm{ }t\geq 0
\end{equation}

\medskip
\begin{defn} (see \cite[Definition 2.10]{Saito}) Let $Y\in SmProj/\mathbb C$. For any
$p\geq 0$, we
define a descending filtration 
\begin{equation}\label{e6.4}
CH^p(Y)=F^0CH^p(Y) \supseteq F^1CH^p(Y) \supseteq F^2CH^p(Y)
\supseteq \dots 
\end{equation} on  the Chow group $CH^p(Y)$ in the following inductive manner:

\medskip
(1) We set $F^0CH^p(Y):=CH^p(Y)$, for all $p\geq 0$. 

\medskip
(2) For some given $r\geq 0$, suppose that we have already defined $F^rCH^p(Y)$
for every $Y\in SmProj/\mathbb C$. Then, we define:
\begin{equation}
F^{r+1}CH^p(Y):=\underset{V,q,\Gamma}{\sum}
Im(\Gamma_*:F^rCH^{p+d_V-q}(V)\longrightarrow CH^p(Y))
\end{equation} where $V$, $q$ and $\Gamma$ range over the following data:

\medskip
(a) $V$  a smooth, projective variety of dimension $d_V$ over $\mathbb C$. 

(b) An integer $q$ such that $p\leq q\leq p+d_V$. 

(c) $\Gamma$ is a cycle of codimension $q$ in $V\times Y$ satisfying the condition
\begin{equation}\label{6.3}
\gamma^{2p-r}\in H^{2q-2p+r}(V)\otimes N^{p-r+1}H^{2p-r}(Y) 
\end{equation} where $\gamma=cl^q(V\times Y)$ is the cohomology class of
$\Gamma$ in $H^{2q}(V\times Y)$ and $\gamma^{2p-r}$ is the K\"{u}nneth
component of $\gamma\in H^{2q}(V\times Y)$ lying in 
$H^{2q-2p+r}(V)\otimes H^{2p-r}(Y)$.

\end{defn}

We are now ready to define the filtration $\{F^rCH^p(\mathcal F)\}_{
r\geq 0}$ on the Chow groups of a presheaf $\mathcal F$
on $Sm/\mathbb C$. 

\begin{defn}\label{D62} Let $\mathcal F$ be a presheaf of sets on $Sm/\mathbb C$ and let $c\in CH^p(
\mathcal F)$ for some $p\geq 0$.  For any $X\in Sm/\mathbb C$, let $h_X$ be the presheaf
on $Sm/\mathbb C$ represented by $X$. For any $r\geq 0$, we say that  $c\in F^rCH^p(\mathcal F)\subseteq 
CH^p(\mathcal F)$ if for any morphism $y:h_Y\longrightarrow \mathcal F$ with $Y
\in SmProj/\mathbb C$, the induced
morphisms
\begin{equation}
c^k_y: CH^k(Y)\longrightarrow CH^{k+p}(Y)\qquad \forall \textrm{ }k\in \mathbb Z
\end{equation} each satisfy
\begin{equation} 
Im(c^k_y|F^sCH^k(Y))\subseteq F^{s+r}CH^{k+p}(Y)\qquad \forall
\textrm{ }s\geq 0
\end{equation} where $\{F^sCH^k(Y)\}_{s\geq 0}$, $k\geq 0$ is Saito's filtration
on the Chow groups of $Y\in SmProj/\mathbb C$.   
\end{defn}

 From Definition \ref{D62}, it is clear that $F^rCH^p(\mathcal F)\supseteq 
F^{r+1}CH^p(\mathcal F)$ for any $r\geq 0$, i.e., $\{F^rCH^p(\mathcal F)\}_{r\geq 0}$
determines a descending filtration on $CH^p(\mathcal F)$. 
We can now extend Saito's filtration to the Chow groups of any smooth and separated (but not necessarily projective) scheme $X$ over
$\mathbb C$.
For this we suppose that $\mathcal F=h_X$ for some scheme $X\in Sm/\mathbb C$. Then,
Definition \ref{D62} gives us a filtration $\{F^rCH^p(h_X)\}_{r\geq 0}$ on the Chow groups
of $h_X$. From Proposition \ref{MSDRNC}, we know  that $CH^p(h_X)\cong CH^p(X)$ and  it follows that we have an induced  filtration:
\begin{equation}\label{e6.9}
CH^p(X)\cong CH^p(h_X)=F^0CH^p(h_X)\supseteq F^1CH^p(h_X)
\supseteq F^2CH^p(h_X)\supseteq \dots
\end{equation} on $CH^p(X)$ for any smooth and separated (but not necessarily projective) scheme $X$ over $\mathbb C$. Our first aim is to show that when $X$ is in fact smooth and projective, i.e.
$X\in SmProj/\mathbb C$, the filtration in \eqref{e6.9}
recovers Saito's filtration on $CH^p(X)$. 

\medskip
\begin{thm}\label{AmitShah} Let $Y\in SmProj/\mathbb C$ and let $h_Y$ be the presheaf on $Sm/\mathbb C$
represented by $Y$. We choose any $p\geq 0$. Let $\{F^rCH^p(Y)\}_{r\geq 0}$ be Saito's filtration on
$CH^p(Y)$ as defined in \eqref{e6.4}.  Let $\{F^rCH^p(h_Y)\}_{r\geq 0}$
be the descending filtration on $CH^p(h_Y)$ as defined in \eqref{e6.9}. Then, the isomorphism $CH^p(h_Y)
\cong CH^p(Y)$ is well behaved with respect to these filtrations, i.e.,  
we have induced isomorphisms:
\begin{equation}
F^rCH^p(h_Y)\cong F^rCH^p(Y)\qquad \forall\textrm{ }r\geq 0
\end{equation} 
\end{thm}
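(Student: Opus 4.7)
\medskip
The plan is to work directly with the explicit isomorphisms $S^p:CH^p(Y)\to CH^p(h_Y)$ and $T^p:CH^p(h_Y)\to CH^p(Y)$ constructed in the proof of Proposition \ref{MSDRNC}. Recall that for $\alpha\in CH^p(Y)$ and a morphism $z:h_Z\to h_Y$ corresponding to $z:Z\to Y$ in $Sm/\mathbb C$, we have $(S^p(\alpha))^k_z(\beta)=z^*(\alpha)\cdot \beta$, while for $c\in CH^p(h_Y)$, $T^p(c)=c^0_1([Y])$. Since $S^p$ and $T^p$ are already known to be mutually inverse, it suffices to verify the two containments $T^p\bigl(F^rCH^p(h_Y)\bigr)\subseteq F^rCH^p(Y)$ and $S^p\bigl(F^rCH^p(Y)\bigr)\subseteq F^rCH^p(h_Y)$ for every $r\geq 0$.

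\medskip
The first containment is essentially immediate from the definitions. Given $c\in F^rCH^p(h_Y)$, I would apply Definition \ref{D62} to the identity morphism $1:h_Y\to h_Y$, which is a legitimate test morphism since $Y\in SmProj/\mathbb C$. Taking $s=0$ and $k=0$ yields $c^0_1\bigl(F^0CH^0(Y)\bigr)\subseteq F^rCH^p(Y)$, and since $[Y]\in CH^0(Y)=F^0CH^0(Y)$ this gives $T^p(c)=c^0_1([Y])\in F^rCH^p(Y)$.

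\medskip
For the reverse containment, take $\alpha\in F^rCH^p(Y)$ and fix a morphism $z:h_Z\to h_Y$ with $Z\in SmProj/\mathbb C$. I must verify that $\beta\mapsto z^*(\alpha)\cdot \beta$ carries $F^sCH^k(Z)$ into $F^{r+s}CH^{p+k}(Z)$ for all $s,k\geq 0$. Because the graph $\Gamma_z$ is an algebraic correspondence and Saito's filtration is stable under correspondence actions by \cite[Lemma 2.11]{Saito}, the pullback $z^*:CH^p(Y)\to CH^p(Z)$ preserves the filtration, so $z^*(\alpha)\in F^rCH^p(Z)$. The multiplicativity \eqref{ee6.2} of Saito's filtration on the smooth projective scheme $Z$ then gives $z^*(\alpha)\cdot \beta\in F^{r+s}CH^{p+k}(Z)$ whenever $\beta\in F^sCH^k(Z)$, which is precisely the condition demanded by Definition \ref{D62}.

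\medskip
The main conceptual obstacle, rather than a computational one, is to keep track of the fact that Definition \ref{D62} only tests morphisms $z:h_Z\to h_Y$ with $Z$ smooth projective; no constraint is imposed when the source is merely smooth and separated. This is exactly what makes the filtration extend Saito's in the most economical way, and is exactly what lets the pullback and multiplicativity properties of Saito's filtration, both formulated only on $SmProj/\mathbb C$, close the argument.
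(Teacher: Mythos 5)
Your argument is correct and follows essentially the same route as the paper's own proof: both directions are handled via the explicit mutually inverse maps $S^p$ and $T^p$, with $T^p(c)=c^0_1([Y])\in F^rCH^p(Y)$ following directly from Definition \ref{D62} applied to the identity, and $S^p(\alpha)\in F^rCH^p(h_Y)$ following from the stability of Saito's filtration under correspondence pullbacks \cite[Lemma 2.11]{Saito} together with its multiplicativity \eqref{ee6.2}. Your closing remark that Definition \ref{D62} only imposes conditions for test objects in $SmProj/\mathbb C$ is exactly the point that makes the argument close, and is implicit in the paper's proof as well.
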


\begin{proof} We know that $Y\in SmProj/\mathbb C$ and hence $Y\in Sm/\mathbb C$. 
Choose $p\geq 0$. From the proof of Proposition \ref{MSDRNC}, we 
recall the mutually inverse morphisms $S^p:CH^p(Y)\longrightarrow CH^p(h_Y)$ and 
$T^p:CH^p(h_Y)\longrightarrow CH^p(Y)$ defining the isomorphism 
$CH^p(Y)\cong CH^p(h_Y)$.  

\medskip
We consider some $\alpha\in F^rCH^p(Y)$. Then, for any $z:h_Z\longrightarrow h_X$
with $Z\in Sm/\mathbb C$, the class $S^p(\alpha)\in CH^p(h_Y)$ determines morphisms:
\begin{equation}
(S^p(\alpha))_z^k:CH^k(Z)\longrightarrow CH^{k+p}(Z) \qquad \beta
\mapsto z^*(\alpha)\cdot \beta
\end{equation} In particular, suppose that $Z\in SmProj/\mathbb C$. Since Saito's filtration
on $CH^p(Y)$ is respected by pullbacks (see \cite[Lemma 2.11]{Saito})and $\alpha
\in F^rCH^p(Y)$, it follows that $z^*(\alpha)\in F^rCH^p(Z)$. Further, since Saito's filtration
on $CH^p(Z)$ is well behaved with respect to intersection products (as described in \eqref{ee6.2}),
it follows that for any $\beta\in F^sCH^k(Y)$, we have 
\begin{equation}
(S^p(\alpha))_z^k(\beta)=z^*(\alpha)\cdot \beta \in F^{r+s}CH^{k+p}(Z)
\end{equation} and hence $S^p(\alpha)\in F^rCH^p(h_Y)$ for any $\alpha\in F^rCH^p(Y)$. 

\medskip
Conversely, consider a class $c\in F^rCH^p(h_Y)$. Then, we have
\begin{equation}
T^p:CH^p(h_Y)
\longrightarrow CH^p(Y) \qquad c\mapsto c_1^0([Y])
\end{equation} Since $[Y]\in F^0CH^0(Y)$, $c\in F^rCH^p(h_Y)$  and $Y\in SmProj/\mathbb C$,
it follows from  Definition \ref{D62} that $c_1^0([Y])\in F^rCH^p(Y)$. Hence, $T^p(c)
=c_1^0([Y])
\in F^rCH^p(Y)$ and it now follows that $F^rCH^p(Y)\cong F^rCH^p(h_Y)$. 

\end{proof}

\medskip

\begin{thm}\label{P64} (a) Let $\mathcal F$ be a presheaf of sets on $Sm/\mathbb C$. Then, the filtration $\{F^rCH^p(\mathcal F)\}_{r\geq 0}$
on the Chow groups of $\mathcal F$ is well behaved with respect to products, i.e., we
have products:
\begin{equation}
F^rCH^p(\mathcal F)\otimes F^sCH^q(\mathcal F)\longrightarrow 
F^{r+s}CH^{p+q}(\mathcal F) \qquad \forall\textrm{ }p,q,r,s\geq 0
\end{equation}

(b) Let $f:\mathcal G\longrightarrow \mathcal F$ be a morphism of presheaves of sets 
on $Sm/\mathbb C$. Then, the filtration $\{F^rCH^p(\mathcal F)\}_{r\geq 0}$ on the Chow groups of $\mathcal F$
is well behaved with respect to pullbacks, i.e., the pullbacks restrict to morphisms:
\begin{equation}
f^*:F^rCH^p(\mathcal F)\longrightarrow F^rCH^p(\mathcal G)\qquad \forall \textrm{ }p,r\geq 0
\end{equation} 
\end{thm}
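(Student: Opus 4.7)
For part (b), my plan is to check the defining condition of $F^rCH^p(\mathcal G)$ directly from the explicit formula for $f^*$ given in \eqref{ATCR}. Given $c\in F^rCH^p(\mathcal F)$, the class $f^*(c)\in CH^p(\mathcal G)$ is defined by setting $f^*(c)^k_x:=c^k_{f\circ x}$ for every $x:h_X\longrightarrow \mathcal G$. To verify that $f^*(c)\in F^rCH^p(\mathcal G)$, I would restrict attention to morphisms $x:h_X\longrightarrow \mathcal G$ with $X\in SmProj/\mathbb C$. Then $f\circ x:h_X\longrightarrow \mathcal F$ is again a morphism from a representable presheaf of a smooth projective scheme into $\mathcal F$, so the assumption $c\in F^rCH^p(\mathcal F)$ gives $c^k_{f\circ x}(F^sCH^k(X))\subseteq F^{s+r}CH^{k+p}(X)$ for every $s\geq 0$. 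This is exactly the condition $f^*(c)^k_x(F^sCH^k(X))\subseteq F^{s+r}CH^{k+p}(X)$ required in Definition \ref{D62}, so $f^*(c)\in F^rCH^p(\mathcal G)$.

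For part (a), the first step is to record the natural product on $CH^*(\mathcal F)$. Given $c\in CH^p(\mathcal F)$ and $c'\in CH^q(\mathcal F)$, I define $c\cdot c'\in CH^{p+q}(\mathcal F)$ by composing the corresponding operational morphisms: for every $x:h_X\longrightarrow \mathcal F$ and every $k\in\mathbb Z$, set $(c\cdot c')^k_x:=c^{k+q}_x\circ (c')^k_x:CH^k(X)\longrightarrow CH^{k+q}(X)\longrightarrow CH^{k+p+q}(X)$. A brief verification shows that this family of morphisms inherits the compatibility conditions (a), (b), (c) of Definition \ref{D31} from $c$ and $c'$; this is a routine check since both factors are compatible with flat pullbacks, proper pushforwards and refined Gysin morphisms, and these operations commute with composition in the appropriate way.

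Once this product is in place, the filtered behavior in (a) is a direct consequence of the definition. Pick $c\in F^rCH^p(\mathcal F)$ and $c'\in F^sCH^q(\mathcal F)$. To show $c\cdot c'\in F^{r+s}CH^{p+q}(\mathcal F)$, I take any morphism $y:h_Y\longrightarrow \mathcal F$ with $Y\in SmProj/\mathbb C$, any $t\geq 0$, and any $\beta\in F^tCH^k(Y)$. Since $c'\in F^sCH^q(\mathcal F)$, we obtain $(c')^k_y(\beta)\in F^{s+t}CH^{k+q}(Y)$; and since $c\in F^rCH^p(\mathcal F)$, applying $c^{k+q}_y$ then yields $c^{k+q}_y\bigl((c')^k_y(\beta)\bigr)\in F^{r+s+t}CH^{k+p+q}(Y)$. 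This is precisely $(c\cdot c')^k_y(\beta)$, so $c\cdot c'$ satisfies the defining inclusion for membership in $F^{r+s}CH^{p+q}(\mathcal F)$.

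I do not anticipate a significant obstacle here; the only genuine content is the sanity check that the composition $c^{k+q}_x\circ (c')^k_x$ defines a bona fide class in $CH^{p+q}(\mathcal F)$, which amounts to unwinding the compatibility axioms of Definition \ref{D31}. Both parts then reduce to one-line verifications from the definitions, reflecting the fact that Saito's filtration on $CH^*(Y)$ for $Y\in SmProj/\mathbb C$ is itself compatible with products and pullbacks, as recalled in \eqref{ee6.2} and \cite[Lemma 2.11]{Saito}.
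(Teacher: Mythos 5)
Your proposal is correct and follows essentially the same route as the paper: part (b) is the direct check that $f^*(c)^k_x=c^k_{f\circ x}$ inherits the filtration condition because $f\circ x$ is again a map from a representable presheaf of a smooth projective scheme, and part (a) defines the product as the composition $(c\cdot d)^k_y=c^{k+q}_y\circ d^k_y$ and chains the two filtration inclusions. Your extra remark that one should verify this composition actually defines a class in $CH^{p+q}(\mathcal F)$ is a point the paper leaves implicit, but it does not change the argument.
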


\begin{proof} (a) We consider any $Y\in SmProj/\mathbb C$ along with a morphism
$y:h_Y\longrightarrow \mathcal F$. We choose classes $c\in F^rCH^p(\mathcal F)$
and $d\in F^sCH^q(\mathcal F)$. Then, for any $\alpha\in F^tCH^k(Y)$, it follows from
Definition \ref{D62} that $d_y^k(\alpha)\in F^{s+t}CH^{k+q}(Y)$. Further, since
$c\in F^rCH^p(\mathcal F)$, we have
\begin{equation}
(c\cdot d)_y^k(\alpha)=c_y^{k+q}(d_y^k(\alpha))\in F^{r+s+t}CH^{k+p+q}(Y)
\end{equation} Hence, the class $(c\cdot d)\in F^{r+s}CH^{p+q}(\mathcal F)$. 

\medskip
(b) We consider any $Y\in SmProj/\mathbb C$ along with a morphism
$y:h_Y\longrightarrow \mathcal G$.  Given $p$, $r\geq 0$, we choose some $c\in F^rCH^p(\mathcal F)$ and consider the class $f^*(c)\in CH^p(\mathcal G)$. Let
$\alpha\in F^tCH^k(Y)$ for some $k$, $t\geq 0$. Then, 
\begin{equation}
f^*(c)_y^k(\alpha)=c_{f\circ y}^k(\alpha)=F^{r+t}CH^{p+k}(Y)
\end{equation} This proves the result. 
\end{proof}

\medskip
In particular, from Proposition \ref{P64} above, it follows that for any smooth and separated (but not  necessarily
projective) scheme $X$ over $\mathbb C$, the filtration 
\begin{equation}\label{Z4.18}
CH^p(X)\cong CH^p(h_X)=F^0CH^p(h_X)\supseteq F^1CH^p(h_X)\supseteq F^2CH^p(h_X)
\supseteq \dots 
\end{equation} as in \eqref{e6.9} is compatible with pullbacks and intersection products. Further, by choosing
$\mathcal F$ to be an ind-scheme $\mathcal X$, it follows from Definition \ref{D62} 
that we have a filtration  (for each $p\geq 0$)
\begin{equation}\label{Z4.19}
CH^p(\mathcal X)=F^0CH^p(\mathcal X)\supseteq F^1CH^p(\mathcal X)\supseteq F^2CH^p(\mathcal X)
\supseteq \dots 
\end{equation} on the Chow groups of the ind-scheme. We now extend
Saito's filtration to the Chow groups of presheaves with transfers.

\medskip
\begin{defn}\label{D66} Let $\mathcal G$ be a presheaf with transfers, i.e., $\mathcal G\in Presh(Cor_{\mathbb C})$. 
For any $X\in Sm/\mathbb C$, let $h_X$ be the presheaf
on $Sm/\mathbb C$ represented by $X$. For any $r\geq 0$, we say that  $c\in F^r\mathcal{CH}^p(\mathcal G)\subseteq 
\mathcal{CH}^p(\mathcal G)$ if for any morphism $y:h_Y\longrightarrow \mathcal G$ with $Y
\in SmProj/\mathbb C$, the induced
morphisms
\begin{equation}
c^k_y: CH^k(Y)\longrightarrow CH^{k+p}(Y)\qquad \forall \textrm{ }k\in \mathbb Z
\end{equation} each satisfy
\begin{equation} \label{Z4.21}
Im(c^k_y|F^sCH^k(Y))\subseteq F^{s+r}CH^{k+p}(Y)\qquad \forall
\textrm{ }s\geq 0
\end{equation} where $\{F^sCH^k(Y)\}_{s\geq 0}$, $k\geq 0$ is Saito's filtration
on the Chow groups of $Y\in SmProj/\mathbb C$.   
\end{defn} 

\medskip
In particular, it follows from Definition \ref{D66} that for any scheme $X\in Sm/\mathbb C$, we
have a filtration $\{F^r\mathcal{CH}^p(C_X)\}_{r\geq 0}$ on the Chow groups
of the presheaf with transfers $C_X$. We now show that for any scheme $X\in Sm/\mathbb C$ that
is also proper,
the filtration on $CH^p(X)\cong CH^p(h_X)$ in \eqref{Z4.18} corresponds to this filtration
on $\mathcal{CH}^p(C_X)$ via the isomorphism $CH^p(X)\cong CH^p(h_X)\cong \mathcal{CH}^p(C_X)$
in \eqref{Z3.18}. 

\medskip
\begin{thm}\label{P68} Let $X\in Sm/\mathbb C$ be a smooth and separated scheme over $\mathbb C$ that
is also proper. 
We choose any $p\geq 0$. Let $\{F^rCH^p(h_X)\}_{r\geq 0}$ be the filtration on $CH^p(h_X)$ defined 
in \eqref{e6.9}. Let $\{F^r\mathcal{CH}^p(C_X)\}_{r\geq 0}$ be the filtration on $\mathcal{CH}^p(C_X)$ 
given by Definition \ref{D66}. Then, the isomorphism $CH^p(X)\cong CH^p(h_X)\cong \mathcal{CH}^p(C_X)$ in 
\eqref{Z3.18}
is well behaved with respect to these filtrations, i.e., we have induced isomorphisms:
\begin{equation}
F^rCH^p(X)\cong F^rCH^p(h_X)\cong F^r\mathcal{CH}^p(C_X)\qquad \forall\textrm{ }r\geq 0
\end{equation}
\end{thm}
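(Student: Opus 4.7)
The plan is to show that the mutually inverse morphisms $S^p:\mathcal{CH}^p(C_X)\to CH^p(h_X)$ and $T^p:CH^p(h_X)\to\mathcal{CH}^p(C_X)$ from the proof of Proposition \ref{T3.5} preserve the filtrations on both sides. Once we establish $S^p(F^r\mathcal{CH}^p(C_X))\subseteq F^rCH^p(h_X)$ and $T^p(F^rCH^p(h_X))\subseteq F^r\mathcal{CH}^p(C_X)$, the isomorphism $F^rCH^p(X)\cong F^rCH^p(h_X)$ is transported by the underlying isomorphism $CH^p(X)\cong CH^p(h_X)$ of \eqref{Z4.18}.

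The inclusion $S^p(F^r\mathcal{CH}^p(C_X))\subseteq F^rCH^p(h_X)$ is straightforward: for $c\in F^r\mathcal{CH}^p(C_X)$ and a morphism $f:Y\to X$ with $Y\in SmProj/\mathbb{C}$, the rule $(S^p(c))_f^k=c_{\Delta\circ f}^k$ evaluates $c$ on the morphism $\Delta\circ f:h_Y\to C_X$, which corresponds to the graph of $f$ as an element of $Cor(Y,X)$. Since $Y$ is smooth projective, Definition \ref{D66} directly yields that $c_{\Delta\circ f}^k$ carries $F^sCH^k(Y)$ into $F^{s+r}CH^{k+p}(Y)$, giving $S^p(c)\in F^rCH^p(h_X)$.

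The main work is in the opposite direction. Take $c\in F^rCH^p(h_X)$ and set $\alpha:=c_1^0([X])\in CH^p(X)$. For any $y\in Cor(Y,X)$ with $Y\in SmProj/\mathbb{C}$ and any $\beta\in F^sCH^k(Y)$, we must verify $(T^p(c))_y^k(\beta)=y^*(\alpha)\cdot\beta\in F^{r+s}CH^{k+p}(Y)$. Since Saito's filtration on smooth projective $Y$ respects intersection products as in \eqref{ee6.2}, it suffices to show $y^*(\alpha)\in F^rCH^p(Y)$. Writing $y=\sum_in_iZ_i$ with each $Z_i\subset Y\times X$ irreducible and finite surjective over $Y$, the projectivity of $Y$ combined with the finiteness of $Z_i\to Y$ forces each $Z_i$ to be projective; taking a resolution of singularities $\nu_i:\tilde Z_i\to Z_i$ (which remains projective in characteristic zero) yields $\tilde Z_i\in SmProj/\mathbb{C}$ together with morphisms $\pi_{Y,i}:\tilde Z_i\to Y$ (proper, generically finite) and $\pi_{X,i}:\tilde Z_i\to X$. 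The projection formula then gives $y^*(\alpha)=\sum_in_i(\pi_{Y,i})_*(\pi_{X,i}^*(\alpha))$.

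The final step combines three compatibilities. Condition (a$'$) of Remark \ref{R3.2} yields $\pi_{X,i}^*(\alpha)=c_{\pi_{X,i}}^0(\pi_{X,i}^*[X])=c_{\pi_{X,i}}^0([\tilde Z_i])$; since $[\tilde Z_i]\in F^0CH^0(\tilde Z_i)$, $\tilde Z_i\in SmProj/\mathbb{C}$, and $c\in F^rCH^p(h_X)$, Definition \ref{D62} places this class in $F^rCH^p(\tilde Z_i)$. The proper pushforward $(\pi_{Y,i})_*:CH^p(\tilde Z_i)\to CH^p(Y)$ between smooth projective varieties preserves Saito's filtration by \cite[Lemma 2.11]{Saito}, so each summand lies in $F^rCH^p(Y)$, and we conclude $y^*(\alpha)\in F^rCH^p(Y)$. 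The principal obstacle is precisely this reduction from an arbitrary correspondence to morphisms of smooth projective varieties; it succeeds because supports of finite correspondences out of a smooth projective $Y$ are automatically projective, allowing us to resolve into an object on which Saito's filtration is both defined and is stable under both pullback and proper pushforward. Properness of $X$ itself is not explicitly invoked in this reduction, but it keeps the framework for the pushforward $p_{Y*}$ in \eqref{3e1} in its classical form and aligns the statement with the ind-proper analogue to be treated next.
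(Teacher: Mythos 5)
Your proof is correct, and for the harder inclusion $T^p(F^rCH^p(h_X))\subseteq F^r\mathcal{CH}^p(C_X)$ it takes a genuinely different route from the paper. The easy direction (via $S^p$ and the diagonal) is identical in both arguments. For the other direction, the paper starts from $z^*(\alpha)=p_{Z*}(z\cdot p_X^*(\alpha))$ and makes the ambient space smooth projective: it applies Chow's lemma to the proper scheme $X$ followed by resolution of singularities to obtain a proper birational $\tilde q:\tilde X\longrightarrow X$ with $\tilde X\in SmProj/\mathbb C$, rewrites $z^*(\alpha)$ as $p_{Z*}\tilde q_{Z*}(\tilde q_Z^*(z)\cdot \tilde q_Z^*p_X^*(\alpha))$ on $Z\times\tilde X\in SmProj/\mathbb C$, and then invokes the stability of Saito's filtration under products, pullbacks and pushforwards there. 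You instead resolve the \emph{support of the correspondence}: each component $Z_i$ of $y$ is finite over the projective $Y$, hence projective, so its resolution $\tilde Z_i$ lies in $SmProj/\mathbb C$, and the projection formula reduces $y^*(\alpha)$ to $\sum_i n_i(\pi_{Y,i})_*c^0_{\pi_{X,i}}([\tilde Z_i])$, to which Definition \ref{D62} and \cite[Lemma 2.11]{Saito} apply directly. Both proofs rest on the same toolkit (testing $c$ against smooth projective objects over $X$, plus Saito's compatibilities); the difference is where the smooth projective model comes from. Your observation that properness of $X$ is never used is accurate and is the real dividend of your approach: the paper's properness hypothesis enters only through Chow's lemma applied to $X$, so your argument in fact establishes the filtered isomorphism $F^rCH^p(h_X)\cong F^r\mathcal{CH}^p(C_X)$ for every $X\in Sm/\mathbb C$, which would in turn remove the ind-properness hypothesis from Proposition \ref{P74}. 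The only points worth tightening are routine: $y$ should be decomposed componentwise when $Y$ is disconnected, and the identity $y^*(\alpha)=\sum_i n_i(\pi_{Y,i})_*(\pi_{X,i}^*(\alpha))$ deserves the one-line justification via $[Z_i]=(\iota_i\circ\nu_i)_*[\tilde Z_i]$ and the projection formula, exactly as in the computation already carried out in the proof of Proposition \ref{MSDRNC}.
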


\begin{proof} From the proof of Proposition \ref{T3.5}, we recall the mutually inverse morphisms
$S^p:\mathcal{CH}^p(C_X)\longrightarrow CH^p(h_X)$ and $T^p:
CH^p(h_X)\longrightarrow \mathcal{CH}^p(C_X)$ defining the isomorphism
$CH^p(h_X)\cong \mathcal{CH}^p(C_X)$. 

\medskip
We choose some class $c\in F^r\mathcal{CH}^p(C_X)$ and consider the element
$S^p(c)\in CH^p(h_X)$. We now consider a morphism $f:h_Y\longrightarrow h_X$
with $Y\in SmProj/\mathbb C$ and an element $\beta\in F^sCH^k(Y)$ for some $s,k\geq 0$. 
Then, from \eqref{namo}, we have:
\begin{equation}
(S^p(c))^k_f(\beta)=c^k_{\Delta\circ f}(\beta)\in CH^{k+p}(Y)
\end{equation} where $\Delta:h_X\longrightarrow C_X$ is the morphism of presheaves
induced by the diagonal correspondence $\Delta$ from $X$ to $X$. Since 
$c\in F^r\mathcal{CH}^p(C_X)$ and $\beta \in F^sCH^k(Y)$, it follows from 
\eqref{Z4.21} that $(S^p(c))^k_f(\beta)=c^k_{\Delta\circ f}(\beta)\in F^{r+s}CH^{k+p}(Y)$. 
Hence, $S^p(c)\in F^rCH^p(h_X)$. 

\medskip
Conversely, choose $\alpha\in F^rCH^p(h_X)\cong F^rCH^p(X)$. We want to show that $T^p(\alpha)\in F^r
\mathcal{CH}^p(C_X)$. For this, we consider some $Z\in SmProj/\mathbb C$
along with a correspondence $z\in Cor(Z,X)$ defining a morphism $z:h_Z\longrightarrow C_X$. 
From \eqref{lka}, it is clear that it suffices to show that the pullback $z^*(\alpha)$ of $\alpha$ along the correspondence 
$z$ gives us an element of $F^rCH^p(Z)$. We consider $Z\times X$ and let
$p_Z:Z\times X\longrightarrow Z$ and $p_X:Z\times X\longrightarrow X$ be the respective
coordinate projections. Then, as in \eqref{3e1}, we have:
\begin{equation}\label{Z4.24}
z^*(\alpha)=p_{Z*}(z\cdot p_X^*(\alpha))
\end{equation} Since $X$ is also proper, it follows from Chow's lemma that there exists 
a birational morphism $q:X'\longrightarrow X$ with $X'$ projective. Since $X'\longrightarrow Spec(\mathbb C)$
and $X\longrightarrow Spec(\mathbb C)$ are both proper, it follows that $q:X'\longrightarrow X$
is proper. 

\medskip
We now apply resolution of singularities to $X'$. Then, (see, for instance, \cite[Theorem 3.27]{Kollar}), we can find a birational projective morphism $q':\tilde{X}
\longrightarrow X'$ with $\tilde{X}$ smooth. Then, since  $X'$ and ${q'}:\tilde{X}
\longrightarrow X'$ are projective, it follows that $\tilde{X}$ is projective. Hence, 
$\tilde{X}\in SmProj/\mathbb C$. Further, since  $q$ is proper and birational, it follows
that $q\circ {q'}:\tilde{X}\longrightarrow X$ is proper and birational. We set $\tilde{q}:=q\circ q':\tilde{X}
\longrightarrow X$ and
$\tilde{q}_Z:Z\times \tilde{X}\longrightarrow Z\times X$. We now rewrite
\eqref{Z4.24} as:
\begin{equation}\label{Z4.25}
z^*(\alpha)=p_{Z*}(z\cdot p_X^*(\alpha))=p_{Z*}(z\cdot (\tilde{q}_{Z*}\tilde{q}_Z^*p^*_X(\alpha)))=p_{Z*}\tilde{q}_{Z*}(
\tilde{q}_Z^*(z)\cdot \tilde{q}_Z^*(p_X^*(\alpha)))
\end{equation} Since $\alpha\in F^rCH^p(h_X)$, it follows from Proposition \ref{P64}
that $\tilde{q}_Z^*(p_X^*(\alpha)) \in F^rCH^p(h_{Z\times \tilde{X}})$. We note that   
$Z\times \tilde{X}\in SmProj/\mathbb C$ and hence the filtration $\{F^rCH^p(h_{Z\times \tilde{X}})\}_{r\geq 0}$ on
$CH^p(h_{Z\times \tilde{X}})$ coincides with 
Saito's filtration on $CH^p(Z\times \tilde{X})$. Since both $Z\times \tilde{X}$ and $Z$ lie in $SmProj/\mathbb C$ and  Saito's filtration as defined
on the Chow groups of objects in $SmProj/\mathbb C$ is well behaved with respect to products, pullbacks and pushforwards,
it follows from \eqref{Z4.25} that $z^*(\alpha)\in F^rCH^p(Z)$. Hence, $T^p(\alpha)\in F^r\mathcal{CH}^p(C_X)$.

\medskip
Thus, we have shown that the morphisms $S^p:\mathcal{CH}^p(C_X)\longrightarrow CH^p(h_X)$ and $T^p:
CH^p(h_X)\longrightarrow \mathcal{CH}^p(C_X)$ are both compatible with the respective filtrations
on $CH^p(h_X)$ and $\mathcal{CH}^p(C_X)$. Since $S^p$ and $T^p$ are inverses of each other,
this proves the result. 
\end{proof}

\medskip
We now want to prove a result analogous to Proposition \ref{P68} for ind-schemes. For this, we recall the notion
of an ind-scheme that is ind-proper.

\medskip
\begin{defn}\label{D70} (see \cite[$\S$ 1.3.4]{Vag}) Let $\mathcal X\in Ind-(Sm/\mathbb C)$ be an ind-scheme. Then, we say that
$\mathcal X$ is ind-proper if the presheaf $\mathcal X$ can be represented as a filtered
colimit of a system of representable presheaves on $Sm/\mathbb C$:
\begin{equation}
\mathcal X=\underset{i\in I}{colim}\textrm{ }h_{X_i}
\end{equation} where each scheme  $X_i\in Sm/\mathbb C$ is also proper.
\end{defn}

\medskip

\begin{lem}\label{L72} (a) Let $\{\mathcal F_i\}_{i\in I}$ be a filtered inductive system of objects in $Presh(Sm/\mathbb C)$. Let $\mathcal F:=colim_{i\in I}
\mathcal F_i$. Then, for each $r\geq 0$, we have isomorphisms:
\begin{equation}
F^rCH^p(\mathcal F)\cong lim_{i\in I}F^rCH^p(\mathcal F_i) \qquad \forall \textrm{ }p\in 
\mathbb Z
\end{equation}

\medskip
(b) Let $\{\mathcal G_i\}_{i\in I}$ be a filtered inductive 
system of objects in $Sh_{et}(Cor_{\mathbb C})$. Let $\mathcal G:=colim_{i\in I}
\mathcal G_i$. Then, for each $r\geq 0$, we have isomorphisms:
\begin{equation}
F^r\mathcal{CH}^p(\mathcal G)\cong lim_{i\in I}F^r\mathcal{CH}^p(\mathcal G_i) \qquad \forall \textrm{ }p\in 
\mathbb Z
\end{equation}
\end{lem}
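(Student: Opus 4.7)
The plan is to promote the isomorphisms of Proposition \ref{sinha} to the filtered level by observing that the filtration condition in Definition \ref{D62} (and Definition \ref{D66}) can be tested one test morphism $y:h_Y\longrightarrow \mathcal F$ at a time, and every such morphism factors through some $\mathcal F_i$ because of the filtered colimit. So there is essentially no new content beyond Lemma \ref{sinha}; I just need to check that the bijection there sends filtered pieces to filtered pieces on both sides.

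For part (a), I start from Lemma \ref{sinha}(a), which gives $CH^p(\mathcal F)\cong lim_{i\in I}CH^p(\mathcal F_i)$ via the pullbacks $f_i^*:CH^p(\mathcal F)\longrightarrow CH^p(\mathcal F_i)$, where $f_i:\mathcal F_i\longrightarrow \mathcal F$ are the canonical maps and $c\mapsto (c_i)_{i\in I}$ with $c_i=f_i^*(c)$. Assume first that each $c_i\in F^rCH^p(\mathcal F_i)$. Given any $y:h_Y\longrightarrow \mathcal F$ with $Y\in SmProj/\mathbb C$, viewed as $y\in \mathcal F(Y)=colim_{i\in I}\mathcal F_i(Y)$ (colimits in $Presh(Sm/\mathbb C)$ are computed objectwise), there exists some $i_0\in I$ and $y_{i_0}:h_Y\longrightarrow \mathcal F_{i_0}$ with $y=f_{i_0}\circ y_{i_0}$. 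From the definition of $f_{i_0}^*$ in \eqref{ATCR}, we then have
\begin{equation}
c^k_y=c^k_{f_{i_0}\circ y_{i_0}}=(f_{i_0}^*(c))^k_{y_{i_0}}=(c_{i_0})^k_{y_{i_0}}\qquad \forall\textrm{ }k\in \mathbb Z.
\end{equation}
Since $c_{i_0}\in F^rCH^p(\mathcal F_{i_0})$, the right-hand side carries $F^sCH^k(Y)$ into $F^{s+r}CH^{k+p}(Y)$, so $c\in F^rCH^p(\mathcal F)$. Conversely, if $c\in F^rCH^p(\mathcal F)$, then for any $i\in I$ and any $y_i:h_Y\longrightarrow \mathcal F_i$ with $Y\in SmProj/\mathbb C$, the same computation applied to $y=f_i\circ y_i$ shows $(c_i)^k_{y_i}=c^k_y$ carries $F^sCH^k(Y)$ into $F^{s+r}CH^{k+p}(Y)$, hence $c_i\in F^rCH^p(\mathcal F_i)$ for every $i$. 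This proves (a).

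For part (b) I repeat the same argument with $\mathcal G$ in place of $\mathcal F$. The only extra ingredient is that the colimit in $Sh_{et}(Cor_{\mathbb C})$ is computed as a presheaf colimit: this is exactly the content of Lemma \ref{0L5}, which ensures $\mathcal G(Y)=colim_{i\in I}\mathcal G_i(Y)$ for every $Y\in Cor_{\mathbb C}$. Consequently every $y:h_Y\longrightarrow \mathcal G$ (equivalently, every element of $\mathcal G(Y)$) factors as $y=f_{i_0}\circ y_{i_0}$ for some $i_0\in I$ and some $y_{i_0}:h_Y\longrightarrow \mathcal G_{i_0}$, and the argument above goes through word for word, now using the defining condition \eqref{Z4.21} of $F^r\mathcal{CH}^p$ in place of Definition \ref{D62}.

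The only place where care is genuinely needed is in the factorization step, and the potential obstacle is precisely whether filtered colimits in the target category of $\mathcal G$ are objectwise; for presheaves of sets this is automatic, and for \'{e}tale sheaves with transfers it is Lemma \ref{0L5}. Once that is in hand, the rest of the proof is a formal manipulation using \eqref{ATCR} and the definitions, so no further technicality arises.
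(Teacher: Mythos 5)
Your proposal is correct and follows essentially the same route as the paper: both reduce to Lemma \ref{sinha} and then check that a class $c$ lies in $F^rCH^p(\mathcal F)$ if and only if each restriction $c_i=f_i^*(c)$ lies in $F^rCH^p(\mathcal F_i)$, using the factorization of any test morphism $y:h_Y\longrightarrow \mathcal F$ through some $\mathcal F_{i_0}$ and the identity $c^k_y=(c_{i_0})^k_{y_{i_0}}$. The only cosmetic difference is that the paper invokes Proposition \ref{P64}(b) for the direction ``$c\in F^r$ implies $c_i\in F^r$'' whereas you inline that one-line computation via \eqref{ATCR}; likewise your appeal to Lemma \ref{0L5} in part (b) matches the paper's treatment.
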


\begin{proof} (a) Choose $p\geq 0$. We consider the canonical morphisms $f_i:\mathcal F_i\longrightarrow 
\mathcal F$, $i\in I$ to the colimit $\mathcal F$. From Lemma \ref{sinha}, we already know that
we have an isomorphism:
\begin{equation}
CH^p(\mathcal F)\cong lim_{i\in I}CH^p(\mathcal F_i) 
\end{equation} For any given $r\geq 0$, if we choose some $c\in F^rCH^p(\mathcal F)$, 
it follows from Proposition \ref{P64} that each $c_i:=f_i^*(c)\in CH^p(\mathcal F_i)$ actually
lies in $F^rCH^p(\mathcal F_i)$. 

\medskip Conversely, let $c\in CH^p(\mathcal F)$ be a class such
that $c_i=f_i^*(c)\in F^rCH^p(\mathcal F_i)$ for each $i\in I$. We consider a morphism of presheaves $y:h_Y
\longrightarrow \mathcal F$ (for some $Y\in SmProj/\mathbb C$) determining an element $y\in
\mathcal F(Y)=colim_{i\in I}\mathcal F_i(Y)$. Then, there exists some $i_0\in I$ and 
$y_{i_0}\in \mathcal F_{i_0}(Y)$ such that
$f_{i_0}(Y)(y_{i_0})=y$ where $f_{i_0}(Y):\mathcal F_{i_0}(Y)\longrightarrow 
\mathcal F(Y)$ is the morphism induced by $f_{i_0}:
\mathcal F_{i_0}\longrightarrow \mathcal F$. Then, we know from the proof of Lemma \ref{sinha} that
\begin{equation}\label{Z4.30}
c_y^k=(c_{i_0})^k_{y_{i_0}}:CH^k(Y)\longrightarrow CH^{k+p}(Y)\qquad \forall\textrm{ }k\in \mathbb Z
\end{equation} Since $c_{i_o}\in F^rCH^p(\mathcal F_{i_0})$, it follows from 
\eqref{Z4.30} and Definition \ref{D62} that
\begin{equation}
Im(c^k_y|F^sCH^k(Y))=Im((c_{i_0})^k_{y_{i_0}}|F^sCH^k(Y))\subseteq F^{s+r}CH^{k+p}(Y)\qquad \forall
\textrm{ }s\geq 0
\end{equation} Hence, $c\in F^rCH^p(\mathcal F)$. 

\medskip
(b) This is proved in a manner similar to part (a). 
\end{proof}

\medskip
Let $\mathcal X\in Ind-(Sm/\mathbb C)$ be an ind-scheme. Then, as mentioned before, it follows from 
Definition \ref{D62} that we have an extension of Saito's filtration to the 
Chow groups of the ind-scheme $\mathcal X$ (for each $p\geq 0$):
\begin{equation}\label{Z4.32}
CH^p(\mathcal X)=F^0CH^p(\mathcal X)\supseteq F^1CH^p(\mathcal X)\supseteq F^2CH^p(\mathcal X)
\supseteq \dots 
\end{equation} If we consider the presheaf with transfers $Cor(\mathcal X)$ associated to 
$\mathcal X$, it follows from Definition \ref{D66} that we have an extension of Saito's filtration
to the Chow groups of $Cor(\mathcal X)$ (for each $p\geq 0$):
\begin{equation}\label{Z4.33}
\mathcal{CH}^p(Cor(\mathcal X))=F^0\mathcal{CH}^p(Cor(\mathcal X))\supseteq F^1\mathcal{CH}^p(Cor(\mathcal X))\supseteq F^2\mathcal{CH}^p(Cor(\mathcal X))
\supseteq \dots 
\end{equation} From Proposition \ref{HHH}, we already know that $CH^p(\mathcal X)\cong \mathcal{CH}^p(Cor(\mathcal X))$. We will now show that when $\mathcal X$ is ind-proper, we have $F^rCH^p(\mathcal X)\cong F^r\mathcal{CH}^p(Cor(\mathcal X))$, $\forall$ $ r\geq 0$. This result is the analogue of Proposition \ref{P68} for ind-schemes. 

\medskip
\begin{thm}\label{P74} Let $\mathcal X\in Ind-(Sm/\mathbb C)$ be an ind-scheme that is also ind-proper. Choose
some $p\geq 0$. 
Let $\{F^rCH^p(\mathcal X)\}_{r\geq 0}$ be the descending filtration on the Chow group $CH^p(\mathcal X)$ as in \eqref{Z4.32}.
Let $\{F^r\mathcal{CH}^p(Cor(\mathcal X))\}_{r\geq 0}$ be the filtration on the Chow group 
of $Cor(\mathcal X)$ as in \eqref{Z4.33}. Then, the isomorphism $CH^p(\mathcal X)\cong \mathcal{CH}^p(Cor(\mathcal X))$ 
in \eqref{Devendra}
is well behaved with respect to these filtrations, i.e., we have isomorphisms:
\begin{equation}
F^rCH^p(\mathcal X)\cong F^r\mathcal{CH}^p(Cor(\mathcal X))\qquad \forall\textrm{ }r\geq 0
\end{equation}

\end{thm}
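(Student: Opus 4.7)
The plan is to reduce the result to Proposition \ref{P68} by a limit argument, using the ind-properness hypothesis to ensure that the underlying inductive system consists of smooth proper schemes. First, since $\mathcal X$ is ind-proper, Definition \ref{D70} lets us express $\mathcal X = \underset{i\in I}{colim}\, h_{X_i}$ with each $X_i\in Sm/\mathbb C$ also proper, where $I$ is filtered. Correspondingly, $Cor(\mathcal X) = \underset{i\in I}{colim}\, C_{X_i}$ in $Sh_{et}(Cor_{\mathbb C})$, by the definition of the $Cor$ functor in \eqref{2.4} and Proposition \ref{P3}.

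Next, I would invoke Lemma \ref{L72}: part (a) applied to the system $\{h_{X_i}\}_{i\in I}$ gives
\begin{equation*}
F^rCH^p(\mathcal X)\cong \underset{i\in I}{lim}\,F^rCH^p(h_{X_i}),
\end{equation*}
and part (b) applied to the system $\{C_{X_i}\}_{i\in I}$ in $Sh_{et}(Cor_{\mathbb C})$ gives
\begin{equation*}
F^r\mathcal{CH}^p(Cor(\mathcal X))\cong \underset{i\in I}{lim}\,F^r\mathcal{CH}^p(C_{X_i}).
\end{equation*}
At each index $i$, the scheme $X_i$ is smooth and proper, so Proposition \ref{P68} yields filtered isomorphisms $F^rCH^p(h_{X_i})\cong F^r\mathcal{CH}^p(C_{X_i})$ for every $r\geq 0$. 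The conclusion $F^rCH^p(\mathcal X)\cong F^r\mathcal{CH}^p(Cor(\mathcal X))$ then follows once one checks that these isomorphisms assemble into an isomorphism of inverse systems.

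The main obstacle is precisely this naturality check: one needs to verify that the maps $S^p$ and $T^p$ from the proof of Proposition \ref{T3.5} commute with the transition morphisms $f_{ij}^*$ in the system $I$. This should be straightforward because $S^p$ and $T^p$ are described by completely natural recipes: $T^p(c) = c^0_1([X])$ depends only on evaluation at the identity, while $S^p$ is defined via the diagonal correspondence $\Delta:h_X\longrightarrow C_X$, which is itself natural in $X$. Concretely, for a morphism $X_i\longrightarrow X_j$ in $Sm/\mathbb C$ (induced by the filtered structure), the induced pullback squares at the level of $CH^p(h_{X_j})$ and $\mathcal{CH}^p(C_{X_j})$ commute with the $S^p$, $T^p$ since pullbacks in both Chow theories are defined through precomposition with the structural morphism $x:h_X\longrightarrow \mathcal F$, cf.\ \eqref{ATCR}.

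Finally, having established compatibility of the isomorphisms $F^rCH^p(h_{X_i})\cong F^r\mathcal{CH}^p(C_{X_i})$ with transition maps, the induced isomorphism on inverse limits combined with the two displayed isomorphisms above yields $F^rCH^p(\mathcal X)\cong F^r\mathcal{CH}^p(Cor(\mathcal X))$ for every $r\geq 0$, which is the desired filtered refinement of Proposition \ref{HHH}.
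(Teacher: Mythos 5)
Your proposal is correct and follows essentially the same route as the paper's own proof: decompose $\mathcal X$ as a filtered colimit of $h_{X_i}$ with each $X_i$ proper, apply Lemma \ref{L72}(a) and (b) to identify both filtered Chow groups with inverse limits, and conclude termwise from Proposition \ref{P68}. The only cosmetic difference is that the paper obtains $Cor(\mathcal X)=\underset{i\in I}{colim}\,C_{X_i}$ by invoking the left-adjointness of $Cor$ from \eqref{gopinath}, and it does not spell out the naturality check you flag, but the argument is the same.
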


\begin{proof} We know that $\mathcal X$ is ind-proper. Hence, we can express $\mathcal X$
as a filtered colimit
\begin{equation}\label{Z4.35}
\mathcal X=\underset{i\in I}{colim}\textrm{ }h_{X_i}
\end{equation} where each scheme $X_i\in Sm/\mathbb C$ is also proper. From Lemma \ref{L72}(a), it now
follows that for any $r\geq 0$, we have:
\begin{equation}\label{pankaja}
F^rCH^p(\mathcal X)=\underset{i\in I}{lim}\textrm{ }F^rCH^p(h_{X_i})
\end{equation} We consider the presheaf with transfers $Cor(\mathcal X)$ associated
to $\mathcal X$. From \eqref{gopinath}, we know that the functor $Cor$ is a left adjoint and hence it preserves
colimits. From \eqref{Z4.35}, it now follows that $Cor(\mathcal X)$ may be expressed as the colimit:
\begin{equation}\label{pramod}
Cor(\mathcal X)=\underset{i\in I}{colim}\textrm{ }Cor(h_{X_i})=\underset{i\in I}{colim}\textrm{ }C_{X_i}
\end{equation} For each object $X_i\in Sm/\mathbb C$, we have already noted before that $C_{X_i}$ is
an \'{e}tale sheaf with transfers, i.e., $C_{X_i}\in Sh_{et}(Cor_{\mathbb C})$. Applying Lemma \ref{L72}(b), it follows
that for any $r\geq 0$, we have:
\begin{equation}\label{ArjunM}
F^r\mathcal{CH}^p(Cor(\mathcal X))=\underset{i\in I}{lim}\textrm{ }F^r\mathcal{CH}^p(C_{X_i})
\end{equation} Finally, since each $X_i$ is proper, it follows from Proposition \ref{P68} that
$F^rCH^p(h_{X_i})\cong F^r\mathcal{CH}^p(C_{ X_i})$ for every $r\geq 0$, $i\in I$. Then, comparing
\eqref{pankaja} and \eqref{ArjunM}, we have the result.

\end{proof}

\begin{bibdiv}
\begin{biblist}

\bib{AB0}{article}{
   author={Banerjee, A.},
   title={Higher bivariant Chow groups and motivic filtrations},
   journal={Trans. Amer. Math. Soc.},
   volume={363},
   date={2011},
   number={11},
   pages={5943--5969},
}

\bib{AB6}{article}{
   author={Banerjee, A.},
   title={Bivariant Chow groups and the topology of envelopes},
   journal={Topology Appl.},
   volume={160},
   date={2013},
   number={1},
   pages={230--240},
}

\bib{AB7}{article}{
   author={Banerjee, A.},
   title={Extensions de la filtration de Saito},
   journal={C. R. Math. Acad. Sci. Paris},
   volume={352},
   date={2014},
   number={5},
   pages={377--382},
}

\bib{Beil}{article}{
   author={Be\u{\i}linson, A. A.},
   title={Height pairing between algebraic cycles},
   conference={
      title={$K$-theory, arithmetic and geometry},
      address={Moscow},
      date={1984--1986},
   },
   book={
      series={Lecture Notes in Math.},
      volume={1289},
      publisher={Springer, Berlin},
   },
   date={1987},
   pages={1--25},
}

\bib{BKY}{article}{
   author={Bezrukavnikov, R.},
   author={Kazhdan, D.},
   author={Varshavsky, Y.},
   title={A categorical approach to the stable center conjecture},
   journal={Ast\'{e}risque},
   number={369},
   date={2015},
   pages={27--97},
}

\bib{BO}{article}{
   author={Bloch, S.},
   author={Ogus, A.},
   title={Gersten's conjecture and the homology of schemes},
   journal={Ann. Sci. \'{E}cole Norm. Sup. (4)},
   volume={7},
   date={1974},
   pages={181--201 (1975)},
}

\bib{CD}{book}{
   author={Cisinski, D.-C},
   author={D\'{e}glise, F.},
   title={Triangulated categories of mixed motives},
   series={Springer Monographs in Mathematics},
   publisher={Springer, Cham},
   date={2019},
}

\bib{FM}{article}{
   author={Fulton, W.},
   author={MacPherson, R.},
   title={Categorical framework for the study of singular spaces},
   journal={Mem. Amer. Math. Soc.},
   volume={31},
   date={1981},
   number={243},
   pages={vi+165},
}

\bib{Fulton}{book}{
   author={Fulton, W.},
   title={Intersection theory},
   series={Ergebnisse der Mathematik und ihrer Grenzgebiete (3) [Results in
   Mathematics and Related Areas (3)]},
   volume={2},
   publisher={Springer-Verlag, Berlin},
   date={1984},
}

\bib{Gait}{article}{
   author={Gaitsgory, D.},
   title={ind-coherent sheaves},
   journal={Mosc. Math. J.},
   volume={13},
   date={2013},
   number={3},
   pages={399--528, 553},
}

\bib{GR1}{article}{
   author={Gaitsgory, D.},
   author={Rozenblyum, N.},
   title={DG indschemes},
   conference={
      title={Perspectives in representation theory},
   },
   book={
      series={Contemp. Math.},
      volume={610},
      publisher={Amer. Math. Soc., Providence, RI},
   },
   date={2014},
   pages={139--251},
}

\bib{GR2}{book}{
   author={Gaitsgory, D.},
   author={Rozenblyum, N.},
   title={A study in derived algebraic geometry. Vol. II. Deformations, Lie
   theory and formal geometry},
   series={Mathematical Surveys and Monographs},
   volume={221},
   publisher={American Mathematical Society, Providence, RI},
   date={2017},
   pages={xxxv+436},
}

\bib{GR3}{book}{
   author={Gaitsgory, D.},
   author={Rozenblyum, N.},
   title={A study in derived algebraic geometry. Vol. I. Correspondences and
   duality},
   series={Mathematical Surveys and Monographs},
   volume={221},
   publisher={American Mathematical Society, Providence, RI},
   date={2017},
   pages={xl+533pp},
}

\bib{EGA4}{article}{
   author={Grothendieck, A.},
   title={\'{E}l\'{e}ments de g\'{e}om\'{e}trie alg\'{e}brique. IV. \'{E}tude locale des sch\'{e}mas et
   des morphismes de sch\'{e}mas IV},
   journal={Inst. Hautes \'{E}tudes Sci. Publ. Math.},
   number={32},
   date={1967},
   pages={361},
}

\bib{Hadian}{article}{
   author={Hadian, M.},
   title={Cycles, Motives, and Arithmetic Applications},
   date={Preprint(2014)},
}

\bib{Jan}{article}{
   author={Jannsen, U.},
   title={Motivic sheaves and filtrations on Chow groups},
   conference={
      title={Motives},
      address={Seattle, WA},
      date={1991},
   },
   book={
      series={Proc. Sympos. Pure Math.},
      volume={55},
      publisher={Amer. Math. Soc., Providence, RI},
   },
   date={1994},
   pages={245--302},
}

\bib{KV1}{article}{
   author={Kapranov, M.},
   author={Vasserot, E.},
   title={Vertex algebras and the formal loop space},
   journal={Publ. Math. Inst. Hautes \'{E}tudes Sci.},
   number={100},
   date={2004},
   pages={209--269},
}

\bib{KV2}{article}{
   author={Kapranov, M.},
   author={Vasserot, E.},
   title={Formal loops. II. A local Riemann-Roch theorem for determinantal
   gerbes},
   language={English, with English and French summaries},
   journal={Ann. Sci. \'{E}cole Norm. Sup. (4)},
   volume={40},
   date={2007},
   number={1},
   pages={113--133},
}

\bib{KV3}{article}{
   author={Kapranov, M.},
   author={Vasserot, E.},
   title={Formal loops. III. Additive functions and the Radon transform},
   journal={Adv. Math.},
   volume={219},
   date={2008},
   number={6},
   pages={1852--1871},
}

\bib{KV4}{article}{
   author={Kapranov, M.},
   author={Vasserot, E.},
   title={Formal loops IV: Chiral differential operators},
   journal={	arXiv:math/0612371 [math.AG]},
   date={2006},
}

\bib{Kollar}{book}{
   author={Koll\'{a}r, J.},
   title={Lectures on resolution of singularities},
   series={Annals of Mathematics Studies},
   volume={166},
   publisher={Princeton University Press, Princeton, NJ},
   date={2007},
   pages={vi+208},
}

\bib{Levine2}{article}{
   author={Levine, M.},
   title={Six lectures on motives},
   conference={
      title={Some recent developments in algebraic $K$-theory},
   },
   book={
      series={ICTP Lect. Notes},
      volume={23},
      publisher={Abdus Salam Int. Cent. Theoret. Phys., Trieste},
   },
   date={2008},
   pages={131--227},
}

\bib{WMV}{book}{
   author={Mazza, C.},
   author={Voevodsky, V.},
   author={Weibel, C.},
   title={Lecture notes on motivic cohomology},
   series={Clay Mathematics Monographs},
   volume={2},
   publisher={American Mathematical Society, Providence, RI; Clay
   Mathematics Institute, Cambridge, MA},
   date={2006},
   pages={xiv+216},
}

\bib{Saito}{article}{
   author={Saito, S.},
   title={Motives and filtrations on Chow groups},
   journal={Invent. Math.},
   volume={125},
   date={1996},
   number={1},
   pages={149--196},
}

\bib{SV}{article}{
   author={Suslin, A.},
   author={Voevodsky, V.},
   title={Relative cycles and Chow sheaves},
   conference={
      title={Cycles, transfers, and motivic homology theories},
   },
   book={
      series={Ann. of Math. Stud.},
      volume={143},
      publisher={Princeton Univ. Press, Princeton, NJ},
   },
   date={2000},
   pages={10--86},
}

\bib{Vag}{article}{
   author={Varagnolo, M.},
   author={Vasserot, E.},
   title={Double affine Hecke algebras and affine flag manifolds, I},
   conference={
      title={Affine flag manifolds and principal bundles},
   },
   book={
      series={Trends Math.},
      publisher={Birkh\"{a}user/Springer Basel AG, Basel},
   },
   date={2010},
   pages={233--289},
}
	
\end{biblist}
\end{bibdiv}

\medskip

\end{document}